\newtheoremstyle{localrem}
	{5pt} 
	{5pt} 
	{\rm} 
	{} 
	{\bf} 
	{{\rm.}} 
	{.7em} 
	{} 
\theoremstyle{localrem}
\newtheorem{Definition}{Definition}[section]
\newtheorem{Remark}[Definition]{Remark}
\newtheorem{Example}[Definition]{Example}
\newtheoremstyle{localthm}
	{5pt} 
	{5pt} 
	{\sl} 
	{} 
	{\bf} 
	{{\rm.}} 
	{.7em} 
	{} 
\theoremstyle{localthm}
\newtheorem{Corollary}[Definition]{Corollary}
\newtheorem{Theorem}[Definition]{Theorem}
\newtheorem{Lemma}[Definition]{Lemma}
\newtheorem{Proposition}[Definition]{Proposition}
\newcommand{\F}{\mathbb{F}}
\newcommand{\Fh}{\hat{\mathbb{F}}}
\newcommand{\N}{\mathbb{N}}
\newcommand{\R}{\mathbb{R}}
\newcommand{\XX}{\mathcal{X}}
\newcommand{\Rmua}{\R^m_{\uparrow}}
\newcommand{\Rmda}{\R^m_{\downarrow}}
\newcommand{\fundot}{\makebox[1ex]{\textbf{$\cdot$}}}
\DeclareMathOperator*{\argmin}{arg\,min}
\def\Pr{\mathop{\rm I\!P}\nolimits}
\def\bs{\boldsymbol}
\def\hat{\widehat}
\newcommand{\Xb}{\bs{X}}
\title{Monotone Least Squares and Isotonic Quantiles}
\author{Alexandre M\"{o}sching, Lutz D\"{u}mbgen \\
        University of Bern}
\date{\today}
\begin{document}
\maketitle

\begin{abstract}
We consider bivariate observations $(X_1,Y_1), \ldots, (X_n,Y_n)$ such that, conditional on the $X_i$, the $Y_i$ are independent random variables with distribution functions $F_{X_i}$, where $(F_x)_x$ is an unknown family of distribution functions. Under the sole assumption that $x \mapsto F_x$ is isotonic with respect to stochastic order, one can estimate $(F_x)_x$ in two ways:\\
(i)~For any fixed $y$ one estimates the antitonic function $x \mapsto F_x(y)$ via nonparametric monotone least squares, replacing the responses $Y_i$ with the indicators $1_{[Y_i \le y]}$.\\
(ii)~For any fixed $\beta \in (0,1)$ one estimates the isotonic quantile function $x \mapsto F_x^{-1}(\beta)$ via a nonparametric version of regression quantiles.

We show that these two approaches are closely related, with (i) being more flexible than (ii). Then, under mild regularity conditions, we establish rates of convergence for the resulting estimators $\hat{F}_x(y)$ and $\hat{F}_x^{-1}(\beta)$, uniformly over $(x,y)$ and $(x,\beta)$ in certain rectangles as well as uniformly in $y$ or $\beta$ for a fixed $x$.

\paragraph{Keywords:}
Regression quantiles, stochastic order, uniform consistency.

\paragraph{AMS 2000 subject classifications:}
62G08, 62G20, 62G30.
\end{abstract}


\section{Introduction}

Suppose we observe $n\ge 1$ pairs
\[
	(X_1,Y_1),(X_2,Y_2),\ldots,(X_n,Y_n)\ \in \ \XX \times \R
\]
with random or fixed covariate values $X_1, \ldots, X_n$ in a set $\XX \subset \R$ such that, conditional on $\Xb = (X_i)_{i=1}^n$, the response values $Y_1, \ldots, Y_n$ are independent with
\[
	\Pr(Y_i \le y \,|\, \bs{X}) \ = \ F_{X_i}(y),
\]
for $1 \le i \le n$ and $y \in \R$. Here $(F_x)_{x \in \XX}$ is an unknown family of distribution functions on $\R$. Note that some values $X_i$ could be identical, so the corresponding random variables $Y_i$ have the same conditional distribution, given $\Xb$.

Our goal is to estimate the whole family $(F_x)_{x \in \XX}$ under the sole assumption that $x \mapsto F_x$ is isotonic (non-decreasing) with respect to stochastic order. This can be expressed in three equivalent ways:

\noindent
(SO.1) \ For arbitrary fixed $y \in \R$, \ $F_x(y)$ is antitonic (non-increasing) in $x \in \XX$.

\noindent
(SO.2) \ For any fixed $\beta \in (0,1)$, the minimal $\beta$-quantile $F_x^{-1}(\beta) := \min\{y\in\R : F_x(y) \ge \beta\}$ is isotonic in $x \in \XX$.

\noindent
(SO.3) \ For any fixed $\beta \in (0,1)$, the maximal $\beta$-quantile $F_x^{-1}(\beta\,+) := \inf\{y\in\R : F_x(y) > \beta\}$ is isotonic in $x \in \XX$.

\noindent
In what follows, we denote with $Q_x(\beta)$ any $\beta$-quantile of $F_x$ and assume that it is isotonic in $x$.

Such a constraint appears natural in several settings. For instance, an employee's income $Y$ tends to increase with his or her age $X$. Other examples in which such a stochastic order is plausible are: The expenditures $Y$ of a household for certain goods in relation to its monthly income $X$; the body height or weight $Y$ of a child in relation to its age $X$. Stochastic ordering constraints also have applications in forecasting. For example, $X_1,\ldots,X_n$ and $Y_1,\ldots,Y_n$ could be the predicted and actual cumulative precipitation amounts on $n$ different days, respectively, with the predictions being obtained from a numerical weather prediction model, see \cite{Henzi2018}.

With condition~(SO.1) in mind, one could think about estimating the antitonic function $x \mapsto F_x(y)$ by means of monotone least squares regression, replacing the response values $Y_i$ with the indicator variables $1_{[Y_i \le y]}$. Precisely, we would set $\hat{F}_x(y) = h(x)$ with an antitonic function $h : \XX \to [0,1]$ such that
\[
	\sum_{i=1}^n (1_{[Y_i \le y]} - h(X_i))^2	
\]
is minimal. The solution $h$ is unique on the set $\XX_n := \{X_1,\ldots,X_n\}$, and on $\XX \setminus \XX_n$ one could extrapolate it in some reasonable way. In the special case of $\XX$ being finite this approach has been proposed and analyzed by \citet{ElBarmi2005}.

Conditions~(SO.2-3) suggest to imitate the regression quantiles of \citet{Koenker1978}. That means, we estimate the conditional $\beta$-quantiles $Q_x(\beta)$ by $\hat{Q}_x(\beta) = h(x)$ with an isotonic function $h : \XX \to \R$ minimizing the empirical risk
\[
	\sum_{i=1}^n \rho_\beta(Y_i - h(X_i)) ,
\]
where $\rho_\beta$ denotes the loss function
\[
	\rho_\beta(z) \ := \ (\beta - 1_{[z < 0]}) z .
\]
This estimator has been considered, for instance, by \citet{PoiraudCasanova2000} who showed that it coincides with an estimator of \citet{Casady_Cryer_1976} which is given by a certain minimax formula involving sample $\beta$-quantiles. The characterization of isotonic estimators in terms of minimax formulae has also been derived by \citet{Robertson_Wright_1980} in a rather general framework including arbitrary partial orders on $\XX$ and general loss functions $R_i(\fundot)$ in place of $\rho_\beta(Y_i - \fundot)$, see also Section~\ref{subsec:Monotone.regression}.

The goals of the present paper are to clarify the connection between these two estimation paradigms and to provide new consistency results in a suitable asymptotic framework.

In Section~\ref{sec:Estimation}, we give a detailed description of the estimator $(\hat{F}_x)_{x \in \XX}$ based on monotone least squares and estimators $(\hat{Q}_x)_{x \in \XX}$ based on monotone regression quantiles. Then we show that the estimators $\hat{Q}_x$ are essentially quantiles of the estimators $\hat{F}_x$, but the latter allow for smoother estimated quantile curves.

In Section~\ref{sec:Asymptotics}, we analyze the estimators in a suitable asymptotic framework with a triangular scheme of observations and $\XX$ being a real interval. It turns out that under certain regularity conditions on the design points and the true distribution functions $F_x$, one can prove rates of convergence for quantities such as
\[
	\sup_{x \in I, y \in J} \, \bigl| \hat{F}_x(y) - F_x(y) \bigr|
	\quad\text{and}\quad
	\sup_{x \in I, \beta \in B} \, \bigl| \hat{Q}_x(\beta) - Q_x(\beta) \bigr|
\]
with intervals $I \subset \XX$, $J \subset \R$ and $B \subset (0,1)$. These results generalize and improve the findings of \cite{Casady_Cryer_1976}, see also \cite{Mukerjee_1993} who analyzed a slightly different estimator. In addition we investigate
\[
	\sup_{y \in J} \, \bigl| \hat{F}_{x_o}(y) - F_{x_o}(y) \bigr|
	\quad\text{and}\quad
	\sup_{\beta \in B} \, \bigl| \hat{Q}_{x_o}(\beta) - Q_{x_x}(\beta) \bigr|
\]
for a fixed interior point $x_o$ of $\XX$. These results complement the analysis of a single quantile curve by \cite{Wright_1984}.

Proofs and technical details are deferred to Section~\ref{sec:Proofs}. We also provide some general results about isotonic regression which are of independent interest.

\section{Estimation of the conditional distributions}
\label{sec:Estimation}

Throughout this section, we view the observations $(X_i,Y_i)$, $1 \le i \le n$, as fixed and focus mainly on computational aspects. Let $x_1 < \cdots < x_m$ be the different elements of the set $\XX_n$ of observed values $X_i$, that means, $m \le n$. For $1 \le j \le m$, we set
\[
	w_j \ := \ \# \{i : X_i = x_j\} .
\]
Then
\[
	\Pr(Y_i \le y) \ = \ F_{x_j}(y)
	\quad\text{whenever} \ X_i = x_j ,
\]
and the unconstrained maximum likelihood estimator of $F_{x_j}(y)$ is given by
\begin{equation}
\label{eq:Fhat.naive}
	\Fh_j(y) \ := \ w_j^{-1} \sum_{i \,:\, X_i = x_j} 1_{[Y_i \le y]} .
\end{equation}

\subsection{Estimation of $F_x$ via monotone least squares}
\label{subsec:Fx.LS}

The estimator $\Fh_j(y)$ in \eqref{eq:Fhat.naive} is rather poor by itself, unless the corresponding subsample size $w_j$ is large. But in connection with our stochastic order constraint, it becomes a useful tool. Note first that, for any function $h : \XX \to \R$,
\[
	\sum_{i=1}^n (1_{[Y_i \le y]} - h(X_i))^2
	\ = \ \sum_{j=1}^m w_j \bigl( \Fh_j(y) - h(x_j) \bigr)^2
		+ \sum_{j=1}^m w_j \Fh_j(y)\bigl(1 - \Fh_j(y)\bigr) , 
\]
and the stochastic order assumption implies that the vector $\bs{F}(y) = (F_{x_j}(y))_{j=1}^m$ belongs to the cone
\[
	\Rmda \ := \ \{\bs{f} \in \R^m : f_1 \ge f_2 \ge \cdots \ge f_m\} . 
\]
Hence one can estimate $\bs{F}(y)$ by the unique least squares estimator
\[
	\hat{\bs{F}}(y) = \bigl( \hat{F}_{x_j}(y) \bigr)_{j=1}^m
	\ := \ \argmin_{\bs{f} \in \Rmda} \, \sum_{j=1}^m w_j \bigl( \Fh_j(y) - f_j \bigr)^2 .
\]

It is well-known that $\hat{\bs{F}}(y)$ may also be represented by the following minimax and maximin formulae, see \citet{Robertson1988}: For $1 \le j \le m$,
\begin{equation}
\label{eq:minmaxmin.LS}
	\hat{F}_{x_j}(y)
	\ = \ \min_{r\leq j} \max_{s\geq j}\, \Fh_{rs}(y)
	\ = \ \max_{s\geq j} \min_{r\leq j}\, \Fh_{rs}(y) ,
\end{equation}
where
\begin{align*}
	\Fh_{rs}(y) \
	&:= \ w_{rs}^{-1}\sum_{j=r}^s w_j \Fh_j(y)
		\ = \ \argmin_{f \in \R} \, \sum_{j=r}^s w_j \bigl( \Fh_j(y) - f \bigr)^2 , \\
	w_{rs} \
	&:= \ \sum_{j=r}^s w_j \ = \ \# \{i:x_r\leq X_i\leq x_s\} ,
\end{align*}
and $r,s$ stand for indices in $\{1,2,\ldots,m\}$ such that $r\leq s$. These formulae are useful for theoretical considerations. In particular, since the pointwise maximum or minimum of finitely many distribution functions is a distribution function, too, we may conclude that for $1 \le j \le m$,
\[
	\hat{F}_{x_j}(\cdot)
	\ \text{is a distribution function} .
\]

The computation of $\hat{\bs{F}}(y)$ is easily accomplished via the pool-adjacent-violators algorithm (PAVA), see \citet{Robertson1988}. Note also that it suffices to compute $\hat{\bs F}(y)$ for at most $n - 1$ different values of $y$. Precisely, if $y_1 < y_2 < \cdots < y_\ell$ are the elements of $\{Y_1,Y_2,\ldots,Y_n\}$, then $\hat{\bs{F}}(y) = \bs{0}$ for $y < y_1$, $\hat{\bs{F}}(y) = \bs{1}$ for $y \ge y_\ell$, and $\hat{\bs{F}}(y) = \hat{\bs{F}}(y_k)$ for $1 \le k < \ell$ and $y \in [y_k,y_{k+1})$. Consequently, since the PAVA is known to have linear complexity, the computation of all estimators $\hat{F}_{x_j}(\cdot)$, $1 \le j \le m$, requires $O(n \log n + m\ell) = O(n^2)$ steps.

Finally, we extrapolate $\hat{\bs{F}}(y)$ to an antitonic function $x \mapsto \hat{F}_x(y)$ on $\XX$. We set $\hat{F}_x(y) := \hat{F}_{x_1}(y)$ for $x \le x_1$ and $\hat{F}_x(y) := \hat{F}_{x_m}(y)$ for $x \ge x_m$. For $x_{j-1} \le x \le x_j$, $1 < j \le m$, one could define $\hat{F}_x(y)$ by linear interpolation, but other antitonic interpolations are possible without affecting our asymptotic results.

\subsection{Plug-in estimation of $Q_x$}
\label{subsec:Qx.plug-in}

Once we have estimated $(F_x)_{x\in\XX}$ by $(\hat{F}_x)_{x\in\XX}$ as in Section~\ref{subsec:Fx.LS}, we can easily determine corresponding quantile functions. For any fixed $\beta \in (0,1)$ and $x_j$, $1\leq j\leq m$, we could determine the minimal and maximal $\beta$-quantiles,
\[
	\hat{F}_{x_j}^{-1}(\beta) \ := \ \min \bigl\{ y \in \R : \hat{F}_{x_j}(y) \ge \beta \bigr\}
	\quad\text{and}\quad
	\hat{F}_{x_j}^{-1}(\beta\,+) \ := \ \inf \bigl\{ y \in \R : \hat{F}_{x_j}(y) > \beta \bigr\} .
\]
Both vectors $(\hat{F}_{x_j}^{-1}(\beta))_{j=1}^m$ and $(\hat{F}_{x_j}^{-1}(\beta\,+))_{j=1}^m$ are isotonic, and any choice of an isotonic function $\XX \ni x \mapsto \hat{Q}_x(\beta)$ such that $\hat{F}_{x_j}^{-1}(\beta) \le \hat{Q}_{x_j}(\beta) \le \hat{F}_{x_j}^{-1}(\beta\,+)$, $1\leq j \leq m$, is a plausible estimator of a $\beta$-quantile curve.

\subsection{Estimation of $Q_x$ via monotone regression quantiles}
\label{subsec:Qx.RQ}

Similarly as in Section~\ref{subsec:Fx.LS}, we focus on the vector $\bs{Q}(\beta) = ( Q_{x_j}(\beta) )_{j=1}^m$. Writing
\[
	\sum_{i=1}^n \rho_\beta(Y_i - h(X_i))
	\ = \ \sum_{j=1}^m \sum_{i:\,X_i=x_j} \rho_\beta(Y_i - h(x_j)) ,
\]
one can estimate $\bs{Q}(\beta)$ by some vector in the set
\[
	\hat{\mathcal{Q}}(\beta) \
	:= \ \argmin_{\bs{q}\in\Rmua} \, T_\beta(\bs{q}),
\]
where $\Rmua := -\Rmda = \{\bs{q} \in \R^m : q_1 \le q_2 \le \cdots \le q_m\}$ and
\[
	T_\beta(\bs{q}) \ := \ \sum_{j=1}^m \sum_{i:\,X_i=x_j} \rho_{\beta}(Y_i - q_j) .
\]
Note that the function $T_\beta(\fundot)$ is convex but not strictly convex on $\R^m$. Hence it need not have a unique minimizer. The next result provides more precise information in terms of the minimal and maximal sample $\beta$-quantiles
\begin{align*}
	\Fh_{rs}^{-1}(\beta) \
	&:= \ \min \bigl\{ y \in \R : \Fh_{rs}(y) \ge \beta \bigr\} , \\
	\Fh_{rs}^{-1}(\beta\,+) \
	&:= \ \inf \bigl\{ y \in \R : \Fh_{rs}(y) > \beta \bigr\}.
\end{align*}

\begin{Lemma}
\label{lem:Qhat.RQ}
The set $\hat{\mathcal{Q}}(\beta)$ is a compact and convex subset of $\Rmua$.\\[0.5ex]
Two particular elements of $\hat{\mathcal{Q}}(\beta)$ are the vectors $\bs{\ell} = (\ell_j)_{j=1}^m$ and $\bs{u} = (u_j)_{j=1}^m$ with components
\begin{align*}
	\ell_j \
	&:= \ \max_{r \le j} \, \min_{s \ge j} \, \Fh_{rs}^{-1}(\beta)
		\ = \ \min_{s \ge j} \, \max_{r \le j} \, \Fh_{rs}^{-1}(\beta) , \\
	u_j \
	&:= \ \min_{s \ge j} \, \max_{r \le j} \, \Fh_{rs}^{-1}(\beta\,+)
		\ = \ \max_{r \le j} \, \min_{s \ge j} \, \Fh_{rs}^{-1}(\beta\,+) .
\end{align*}
Any vector $\bs{q} \in \hat{\mathcal{Q}}(\beta)$ satisfies $\bs{\ell} \le \bs{q} \le \bs{u}$ componentwise.\\[0.5ex]
On the other hand, suppose that $\bs{q} \in \Rmua$ satisfies $\bs{\ell} \le \bs{q} \le \bs{u}$ and that $\{j < m : q_j < q_{j+1}\}$ is a subset of $\{j < m : \ell_j < \ell_{j+1} \ \text{or} \ u_j < u_{j+1}\}$. Then $\bs{q} \in \hat{\mathcal{Q}}(\beta)$.\\[0.5ex]
Finally, for any $j \in \{1,\ldots,m\}$, the set $\{x_j\} \times (\ell_j,u_j)$ contains no data point $(X_i,Y_i)$.
\end{Lemma}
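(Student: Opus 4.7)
The plan is to dispatch the four assertions in order, relying on convex analysis and the min--max theory of monotone regression with convex losses. For compactness and convexity, $T_\beta$ is convex as a sum of convex maps and coercive on $\R^m$ since every coordinate $q_j$ appears in $w_j\ge 1$ summands $\rho_\beta(Y_i-q_j)$ with $\rho_\beta$ growing linearly at infinity. Hence $\argmin T_\beta$ is nonempty, compact and convex, and these properties pass to its intersection with the closed cone $\Rmua$.

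For the identification of $\bs{\ell}$ and $\bs{u}$ as the componentwise minimum and maximum of $\hat{\mathcal{Q}}(\beta)$, I apply a general min--max theorem for monotone regression with convex losses. Setting $L_j(q):=\sum_{i:X_i=x_j}\rho_\beta(Y_i-q)$, each pooled objective $q\mapsto\sum_{k=r}^{s}L_k(q)$ attains its minimum on exactly $[\Fh_{rs}^{-1}(\beta),\Fh_{rs}^{-1}(\beta\,+)]$. A Robertson--Wright style characterization (which I expect to state and prove as a general isotonic regression lemma in Section~\ref{sec:Proofs}) then identifies the componentwise extrema of $\hat{\mathcal{Q}}(\beta)$ with the expressions defining $\ell_j$ and $u_j$, immediately yielding $\bs{\ell}\le\bs{q}\le\bs{u}$ for every $\bs{q}\in\hat{\mathcal{Q}}(\beta)$.

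For the converse, I invoke the classical level--block characterization: $\bs{q}\in\hat{\mathcal{Q}}(\beta)$ if and only if on each maximal block $[a,b]$ where $\bs{q}$ is constant, the common value $q_a$ satisfies $\Fh_{ab}^{-1}(\beta)\le q_a\le\Fh_{ab}^{-1}(\beta\,+)$. Under $\bs{\ell}\le\bs{q}\le\bs{u}$ together with $\{j<m:q_j<q_{j+1}\}\subset\{j<m:\ell_j<\ell_{j+1}\text{ or }u_j<u_{j+1}\}$, the endpoints of each such block $[a,b]$ of $\bs{q}$ are jump positions of $\bs{\ell}$ or $\bs{u}$, and I plan to exploit this structural compatibility to manipulate the min--max formulas for $\ell_a$ and $u_b$ and so deduce the required pool--quantile inclusion for $q_a$. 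Making this combinatorial translation precise is the main technical obstacle of the proof.

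For the final assertion, suppose for contradiction that $X_{i_0}=x_j$ and $\ell_j<Y_{i_0}<u_j$. By convexity the segment $\bs{q}(t):=(1-t)\bs{\ell}+t\bs{u}$ lies in $\hat{\mathcal{Q}}(\beta)$ for all $t\in[0,1]$, so the convex function $t\mapsto T_\beta(\bs{q}(t))$ is constant and hence affine on $[0,1]$. Being a sum of convex functions of $t$ that is affine, each individual summand $t\mapsto\rho_\beta(Y_i-q_{j'}(t))$ must then itself be affine on $[0,1]$ (a convex function lying below its secant and summing to an affine function must coincide with its secant). However $q_j(t)=(1-t)\ell_j+tu_j$ crosses $Y_{i_0}$ linearly at some $t_0\in(0,1)$, so $t\mapsto\rho_\beta(Y_{i_0}-q_j(t))$ has a genuine kink at $t_0$ and is not affine, the desired contradiction.
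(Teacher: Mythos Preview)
Your treatment of the second assertion (deferring to a general Robertson--Wright style theorem with $L_{ab}=\Fh_{ab}^{-1}(\beta)$, $U_{ab}=\Fh_{ab}^{-1}(\beta\,+)$) and of the last assertion (via affinity of $t\mapsto T_\beta((1-t)\bs{\ell}+t\bs{u})$) matches the paper's argument essentially exactly.

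There are, however, two genuine gaps. The minor one concerns compactness and convexity: you write that $\argmin_{\R^m} T_\beta$ is nonempty, compact and convex, and that ``these properties pass to its intersection with the closed cone $\Rmua$''. But $\hat{\mathcal{Q}}(\beta)=\argmin_{\Rmua} T_\beta$ is \emph{not} $(\argmin_{\R^m} T_\beta)\cap\Rmua$; the latter is typically empty. The correct (and equally short) argument is that $T_\beta$ is continuous, convex and coercive, and $\Rmua$ is closed and convex, so the constrained argmin is nonempty, compact and convex.

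The serious gap is in your plan for the third assertion. The ``classical level--block characterization'' you invoke, namely that $\bs{q}\in\hat{\mathcal{Q}}(\beta)$ if and only if on each maximal constant block $\{a,\ldots,b\}$ of $\bs{q}$ the common value lies in $[\Fh_{ab}^{-1}(\beta),\Fh_{ab}^{-1}(\beta\,+)]$, is \emph{false} in the ``if'' direction. A counterexample: $m=2$, $\beta=0.5$, one observation $Y=0$ at $x_1$ and two observations $Y\in\{5,10\}$ at $x_2$. Then $\Fh_{12}^{-1}(0.5)=\Fh_{12}^{-1}(0.5\,+)=5$, so $\bs{q}=(5,5)$ has a single maximal block on which your criterion holds; yet $T_{0.5}(5,5)=5>2.5=T_{0.5}(0,5)$, so $(5,5)\notin\hat{\mathcal{Q}}(0.5)$. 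The correct characterization (property~(ii') in the paper's Proposition~\ref{Prop:Characterization}) requires, for each maximal block $\{a_0,\ldots,b_0\}$ with value $v$, that $v\le U_{a_0 b}$ for \emph{every} $b\in\{a_0,\ldots,b_0\}$ and $v\ge L_{a b_0}$ for \emph{every} $a\in\{a_0,\ldots,b_0\}$, which is strictly stronger. The paper avoids this combinatorial difficulty altogether: since $\bs{\ell},\bs{u}\in\hat{\mathcal{Q}}(\beta)$ and the set is convex, every $\bs{x}(\lambda)=(1-\lambda)\bs{\ell}+\lambda\bs{u}$ with $\lambda\in(0,1)$ lies in $\hat{\mathcal{Q}}(\beta)$ and has jump set exactly $\{j<m:\ell_j<\ell_{j+1}\ \text{or}\ u_j<u_{j+1}\}$; reading off property~(ii) for these $\bs{x}(\lambda)$ and letting $\lambda\to 0,1$ yields the needed inequalities $u_a\le U_{ab}$ and $\ell_b\ge L_{ab}$ at all jump indices, which immediately transfers to any $\bs{q}$ satisfying the hypotheses.
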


\begin{Remark}
\label{rem:n=2}
At first glance, one might suspect that any isotonic vector $\bs{q} \in \Rmua$ satisfying $\bs{\ell} \le \bs{q} \le \bs{u}$ minimizes $T_\beta$. But this conjecture is wrong. As a counterexample, consider the case of $n = 2$ observations with $X_1 < X_2$ but $Y_1 > Y_2$. Here $m = 2$, and $\Fh_{11}(y) = 1_{[y \ge Y_1]}$, $\Fh_{22}(y) = 1_{[y \ge Y_2]}$ and 
\[
	\Fh_{12}(y) \ = \ \begin{cases}
		0 & \text{if} \ y < Y_2, \\
		0.5 & \text{if} \ Y_2 \le y < Y_1 , \\
		1 & \text{if} \ y \ge Y_1 .
	\end{cases}
\]
Hence
\[
	\bs{\ell} \ = \ (Y_2,Y_2)^\top
	\quad\text{and}\quad
	\bs{u} \ = \ (Y_1,Y_1)^\top ,
\]
because $\Fh_{11}^{-1}(0.5) = \Fh_{11}^{-1}(0.5\,+) = Y_1$, $\Fh_{22}^{-1}(0.5) = \Fh_{22}^{-1}(0.5\,+) = Y_2$ and
\[
	\Fh_{12}^{-1}(0.5) \ = \ Y_2, \quad
	\Fh_{12}^{-1}(0.5\,+) \ = \ Y_1 .
\]
But
\[
	\hat{\mathcal{Q}}(0.5) \ = \ \bigl\{ (q,q)^\top : q \in [Y_2,Y_1] \bigr\} ,
\]
because for $\bs{q} \in [Y_2,Y_1]^2$ with $q_1 \le q_2$,
\[
	\rho_{0.5}(Y_1 - q_1) + \rho_{0.5}(Y_2 - q_2)
	\ = \ 0.5 (Y_1 - q_1 + q_2 - Y_2)
	\ \ge \ 0.5(Y_1 - Y_2)
\]
with equality if, and only if, $q_1 = q_2$.
\end{Remark}

\subsection{Connection between the two estimation paradigms}

Restricting the plug-in quantile estimators of Section~\ref{subsec:Qx.plug-in} to the set $\XX_n$ of observed $X$-values leads to the set
\[
	\hat{\mathcal{Q}}_{\mathrm{plug-in}}(\beta) \
	:= \ \bigl\{ \bs{q}\in\Rmua :
		\hat{F}_{x_j}^{-1}(\beta) \le q_j \le \hat{F}_{x_j}^{-1}(\beta\,+)
		\ \text{for} \ 1 \le j \le m \bigr\} .
\]
This set is closely related to the set $\hat{\mathcal{Q}}(\beta)$:

\begin{Lemma}
\label{lem:Qhat.LS}
The vectors $\bs{\ell}$ and $\bs{u}$ in Lemma~\ref{lem:Qhat.RQ} are given by
\[
	\ell_j \ = \ \hat{F}_{x_j}^{-1}(\beta)
	\quad\text{and}\quad
	u_j \ = \ \hat{F}_{x_j}^{-1}(\beta\,+)
	\quad\text{for} \ 1 \le j \le m .
\]
In particular, $\hat{\mathcal{Q}}(\beta) \subset \hat{\mathcal{Q}}_{\mathrm{plug-in}}(\beta)$.
\end{Lemma}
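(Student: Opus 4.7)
The plan is to prove both identities from the duality between the distribution function and its generalized inverse, combined carefully with both the minimax and maximin representations of $\hat{F}_{x_j}$ from \eqref{eq:minmaxmin.LS}.

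First, for $\ell_j$, I would characterize $\{y:\hat{F}_{x_j}^{-1}(\beta)\le y\}$ via the standard equivalence $F^{-1}(\beta)\le y\iff F(y)\ge\beta$. Plugging in the \emph{minimax} form $\hat{F}_{x_j}(y)=\min_{r\le j}\max_{s\ge j}\Fh_{rs}(y)$ and unpacking the quantifiers, the condition $\hat{F}_{x_j}(y)\ge\beta$ becomes: for every $r\le j$ there exists $s\ge j$ with $\Fh_{rs}(y)\ge\beta$, i.e.\ with $\Fh_{rs}^{-1}(\beta)\le y$. Rewriting gives $\max_{r\le j}\min_{s\ge j}\Fh_{rs}^{-1}(\beta)\le y$, so $\hat{F}_{x_j}^{-1}(\beta)$ and $\ell_j$ are bounded above by exactly the same set of $y$'s, hence coincide.

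For $u_j$ I would use the dual characterization $F^{-1}(\beta\,+)\ge y\iff F(y-)\le\beta$, together with the \emph{maximin} form $\hat{F}_{x_j}(y)=\max_{s\ge j}\min_{r\le j}\Fh_{rs}(y)$; here taking the left limit commutes with the finite min and max, so $\hat{F}_{x_j}(y-)=\max_{s\ge j}\min_{r\le j}\Fh_{rs}(y-)$. The condition $\hat{F}_{x_j}(y-)\le\beta$ then becomes: for every $s\ge j$ there exists $r\le j$ with $\Fh_{rs}(y-)\le\beta$, i.e.\ with $\Fh_{rs}^{-1}(\beta\,+)\ge y$. That is $\min_{s\ge j}\max_{r\le j}\Fh_{rs}^{-1}(\beta\,+)\ge y$, again matching $u_j$ for every $y$. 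The only real subtlety in the whole argument is making sure to pair each quantile identity with the right one of the two representations in \eqref{eq:minmaxmin.LS}: if one used the \emph{same} representation for both $\ell_j$ and $u_j$, the order of quantifiers would come out swapped from what is needed.

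Finally, the inclusion $\hat{\mathcal{Q}}(\beta)\subset\hat{\mathcal{Q}}_{\mathrm{plug\text{-}in}}(\beta)$ is an immediate consequence: by Lemma~\ref{lem:Qhat.RQ} every $\bs{q}\in\hat{\mathcal{Q}}(\beta)$ satisfies $\bs{\ell}\le\bs{q}\le\bs{u}$, and by what was just shown this is exactly the defining condition $\hat{F}_{x_j}^{-1}(\beta)\le q_j\le\hat{F}_{x_j}^{-1}(\beta\,+)$ of $\hat{\mathcal{Q}}_{\mathrm{plug\text{-}in}}(\beta)$. There is no substantial obstacle beyond bookkeeping; the proof is essentially a routine manipulation of quantifiers once the correct pairing of representations is chosen.
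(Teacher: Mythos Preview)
Your proposal is correct and follows essentially the same route as the paper's own proof: both arguments use the equivalence $F^{-1}(\beta)\le y\iff F(y)\ge\beta$ with the minimax form of $\hat{F}_{x_j}$ for $\ell_j$, and the dual equivalence $F^{-1}(\beta\,+)\ge y\iff F(y-)\le\beta$ with the maximin form for $u_j$, then unpack the quantifiers exactly as you describe. Your explicit remarks that the left limit commutes with the finite $\min/\max$ and that the two identities must be paired with different representations from \eqref{eq:minmaxmin.LS} are useful clarifications the paper leaves implicit.
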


\begin{Example}
The simple example in Remark~\ref{rem:n=2} shows that $\hat{\mathcal{Q}}(\beta) \ne \hat{\mathcal{Q}}_{\rm plug-in}(\beta)$ in general. Let us illustrate this point with a more interesting numerical example. Figure~\ref{Fig:Median} shows a simulated sample of size $n = 100$. In addition, it shows the minimal and maximal median curves $x \mapsto \hat{F}_x^{-1}(0.5), \hat{F}_{x}^{-1}(0.5\,+)$ obtained by linear interpolation of the points $\ell_j = \hat{F}_{x_j}^{-1}(0.5)$ and $u_j = \hat{F}_{x_j}^{-1}(0.5\,+)$, respectively, as well as a piecewise linear median curve $x \mapsto \hat{Q}_x(0.5)$ minimizing $\int q'(x)^2 \, dx$ among all isotonic functions $q : \R \to \R$ such that $\ell_j \le q(x_j) \le u_j$, $1\leq j\leq m$. Although $\hat{Q}_x(0.5)$ is a natural candidate and smoother in $x$ than $\hat{F}_x^{-1}(0.5)$ or $\hat{F}_x^{-1}(0.5\,+)$, the corresponding values of $T_{0,5}(\fundot)$ are (rounded to three digits)
\[
	T_{0.5} \Bigl( \bigl(\hat{Q}_{x_j}(0.5) \bigr)_{j=1}^m \Bigr)
	\ = \ 45.343 \ > \ T_{0.5}(\bs{\ell}) = T_{0,5}(\bs{u}) \ = \ 44.112 .
\]
The true medians $F_x^{-1}(0.5) = F_x^{-1}(0.5\,+)$ are depicted as well.

\begin{figure}[t]
\centering
\includegraphics[width=0.99\textwidth]{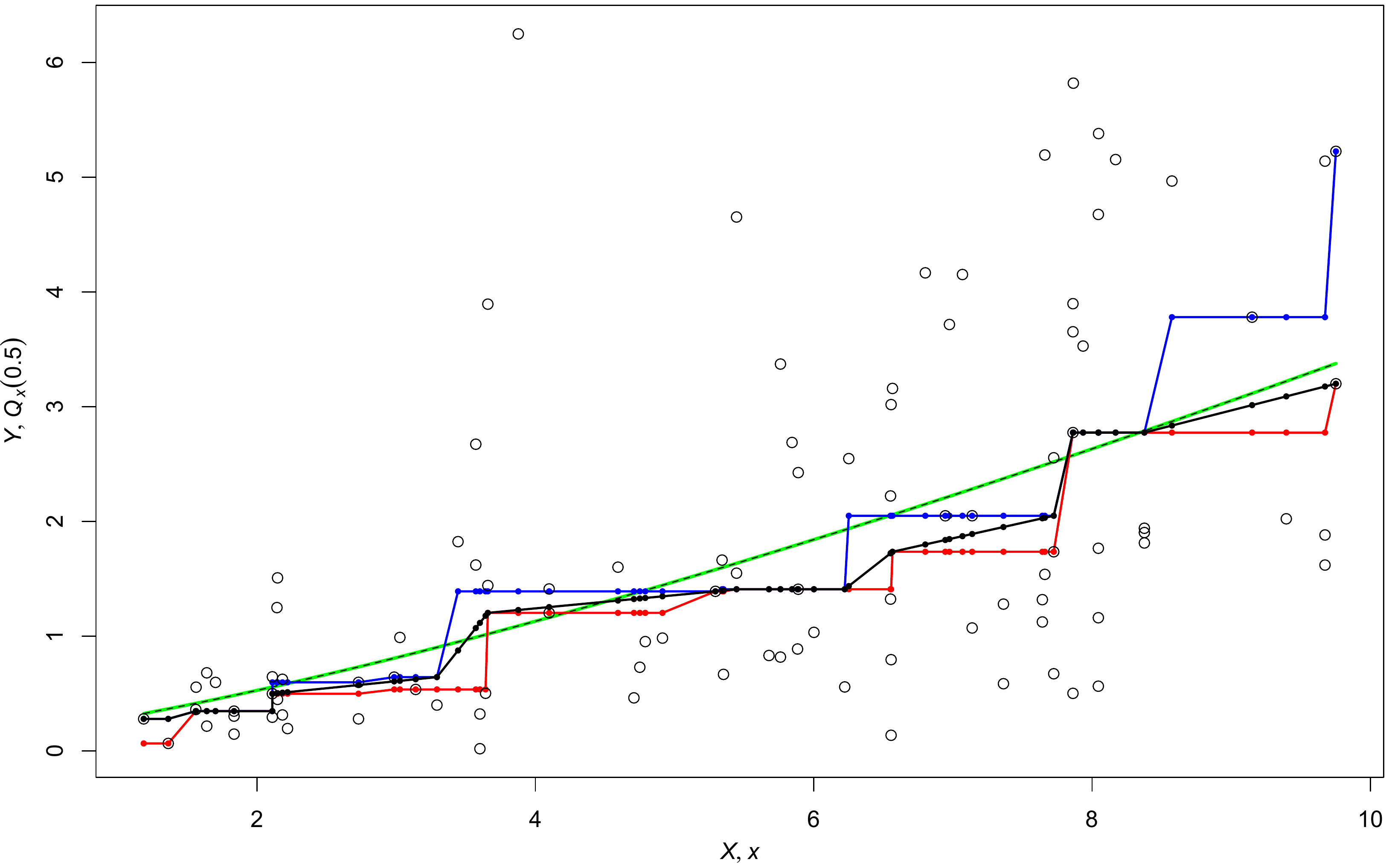}
\caption{\label{Fig:Median} $n = 100$ data pairs, together with the true medians $F_x^{-1}(0.5)$ (green, dashed) and the estimated medians $\hat{F}_x^{-1}(0.5)$ (lower red), $\hat{F}_x^{-1}(0.5\,+)$ (upper blue) and $\hat{Q}_x(0.5)$ (middle black).}
\end{figure}
\end{Example}

\section{Asymptotic considerations}
\label{sec:Asymptotics}

We provide some asymptotic properties of the estimators just introduced in case of a real interval $\XX$ and a triangular scheme of observations: For each sample size $n \ge 2$, consider observations $(X_{n1},Y_{n1}), \ldots, (X_{nn}, Y_{nn})$ with $X_{n1},\ldots,X_{nn} \in \XX$ such that conditional on $\Xb_n := (X_{ni})_{i=1}^n$, the random variables $Y_{n1},\ldots,Y_{nn}$ are independent with
\[
	\Pr(Y_{ni} \le y \,|\, \Xb_n) \ = \ F_{X_{ni}}^{}(y),
\]
for $1 \le i \le n$ and $y \in \R$. The resulting constrained estimators of $F_x(y)$ and $Q_x(\beta)$ are denoted by $\hat{F}_{nx}(y)$ and $\hat{Q}_{nx}(\beta)$, respectively. In what follows, we derive asymptotic properties of these estimators under moderate assumptions, where asymptotic statements refer to $n \to \infty$.

\cite{ElBarmi2005} derived asymptotic properties in case of a fixed finite set~$\XX$, which is easier to handle than the present setting.

\subsection{Uniform consistency in both arguments}

First of all, we assume that the distribution functions $F_x$ are H\"older-continuous in $x$, at least on some subinterval of $\XX$:

\paragraph{(A.1)}
For given intervals $I \subset \XX$ and $J \subset \R$, there exist constants $\alpha \in (0,1]$ and $C_1 > 0$ such that
\[
	\sup_{y\in J} \, \bigl| F_w(y) - F_x(y) \bigr|
	\ \le \ C_1 |w - x|^\alpha
	\quad\text{for arbitrary} \ w,x \in I.
\]

Secondly, we assume that the design points are `asymptotically dense' within this interval $I$. To state this precisely, we need some notation. We write
\[
	\rho_n \ := \ \frac{\log n}{n} ,
\]
and $\lambda(\fundot)$ stands for Lebesgue measure. Moreover, the absolute frequency of the design points $X_{ni}$ is denoted by $w_n(\fundot)$, that means,
\[
	w_n(B) \ := \ \#\{i \le n : X_{ni} \in B\}
	\quad\text{for} \ B \subset \XX .
\]

\paragraph{(A.2)}
For given constants $C_2, C_3 > 0$, let $A_n$ be the event that for arbitrary intervals $I_n \subset I$,
\[
	\frac{w_n(I_n)}{n\lambda(I_n)} \ \ge \ C_2
	\quad\text{whenever}\quad
	\lambda(I_n) \ \ge \ \delta_n := C_3 \rho_n^{1/(2\alpha+1)} .
\]
Then,
\[
	\Pr(A_n) \ \to \ 1 .
\]

\begin{Remark}[Fixed design points]
Suppose that $I = \XX = [a,b]$ with real numbers $a < b$, and let $X_{ni} = a + (i/n)(b-a)$ for $1 \le i \le n$. Then Assumption~(A.2) is satisfied for any fixed $C_2 < 1$ and $C_3 > 0$.
\end{Remark}

\begin{Remark}[Random design points]
Suppose that $X_{n1}, X_{n2}, \ldots, X_{nn}$ are independent random variables with density $g$ on $\XX$ such that $\inf_{x \in I} g(x) > 0$ on $I$. With standard results from empirical processes on the real line, including exponential inequalities for beta distributions, we can show that for any choice of $\alpha \in (0,1]$, $0 < C_2 < \inf_{x \in I} g(x)$ and $C_3 > 0$,
\[
	\inf \Bigl\{ \frac{w_n(I_n)}{n\lambda(I_n)} :
		\text{intervals} \ I_n \subset I \ \text{with} \ \lambda(I_n) \ge \delta_n \Bigr\}
	\ \ge \ C_2
\]
with asymptotic probability one as $n \to \infty$. Hence Assumption~(A.2) is satisfied.
\end{Remark}

Under the two assumptions above, the estimator $\hat{F}_{nx}$ satisfies a uniform consistency property.

\begin{Theorem}
\label{Thm:F_Asymptotics}
Suppose that Assumptions~(A.1--2) are satisfied. Then there exists a $C = C(C_1,C_2,C_3) > 0$ such that
\[
	\lim_{n\to \infty}	
	\Pr\Bigl(
		\sup_{x \in I_n,y\in J} \, \bigl| \hat{F}_{nx}(y) - F_x(y) \bigr|
		\ \geq \ 
		C \rho_n^{\alpha/(2\alpha+1)} 
	\Bigr)
	\ = \ 0,
\]
where $I_n := \{x \in \R : [x \pm \delta_n] \subset I\}$.
\end{Theorem}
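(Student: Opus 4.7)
The plan is to exploit the minimax/maximin representation \eqref{eq:minmaxmin.LS} and to balance a Hölder-type bias (from (A.1)) against a stochastic deviation of Dvoretzky--Kiefer--Wolfowitz (DKW) type; the chosen scale $\delta_n = C_3\rho_n^{1/(2\alpha+1)}$ is precisely the one that equates both contributions, giving the rate $\rho_n^{\alpha/(2\alpha+1)}$. I would first introduce a good event $B_n$ on which, conditionally on $\Xb_n$,
\[
	\max_{1\le r\le s\le m}\,\sup_{y\in\R}\,\sqrt{w_{rs}}\,\bigl|\Fh_{rs}(y) - \E[\Fh_{rs}(y)\mid\Xb_n]\bigr|
	\ \le \ C_4\sqrt{\log n}
\]
for some sufficiently large $C_4$. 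Since conditionally on $\Xb_n$ the $Y_{ni}$'s are independent, a DKW--Massart inequality controls the $y$-supremum within each band $[r,s]$, and a union bound over the $O(n^2)$ bands gives $\Pr(B_n)\to 1$. The whole analysis proceeds on $A_n\cap B_n$.

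Fix $x\in I_n$ and $y\in J$. Let $x_j$ (resp.\ $x_{j'}$) be the largest (resp.\ smallest) design point $\le x$ (resp.\ $\ge x$); both exist on $A_n$ because $w_n([x-\delta_n,x])$ and $w_n([x,x+\delta_n])$ are at least $C_2 n\delta_n\ge 1$ for $n$ large. Set
\[
	r^* \ := \ \min\{r : x_r \ge x-\delta_n\}, \qquad
	s^* \ := \ \max\{s : x_s \le x+\delta_n\}.
\]
The essential observation is that $[x-\delta_n,x_{r^*})$ and $(x_j,x]$ contain no design points, so
\[
	w_{r^*,j} \ = \ w_n([x-\delta_n,x]) \ \ge \ C_2 n\delta_n,
\]
and symmetrically $w_{j',s^*}\ge C_2 n\delta_n$. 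Since $w_{rs}$ is monotone in each endpoint, \emph{every} average $\Fh_{r^*,s}$ with $s\ge j$, and every $\Fh_{r,s^*}$ with $r\le j'$, is then based on at least $C_2 n\delta_n$ observations.

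For the upper bound, antitonicity of $\hat{F}_{n\cdot}(y)$ combined with the minimax identity in \eqref{eq:minmaxmin.LS} gives
\[
	\hat{F}_{nx}(y) \ \le \ \hat{F}_{nx_j}(y) \ \le \ \max_{s\ge j}\Fh_{r^*,s}(y).
\]
Assumption~(A.1) applied to indices $k\in[r^*,j]$ (for which $x_k\in[x-\delta_n,x]\subset I$), combined with antitonicity of $F_\cdot(y)$ for $k>j$, yields $\E[\Fh_{r^*,s}(y)\mid\Xb_n]\le F_x(y)+C_1\delta_n^\alpha$ uniformly in such $s$; and $B_n$ supplies $|\Fh_{r^*,s}(y)-\E[\Fh_{r^*,s}(y)\mid\Xb_n]|\le C_4\sqrt{\log n/(C_2 n\delta_n)}$. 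The choice of $\delta_n$ balances these into $\hat{F}_{nx}(y)-F_x(y)\le C\rho_n^{\alpha/(2\alpha+1)}$. The matching lower bound follows symmetrically via $\hat{F}_{nx}(y)\ge \hat{F}_{nx_{j'}}(y)\ge \min_{r\le j'}\Fh_{r,s^*}(y)$.

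The main obstacle, in my view, is that the raw minimax formula ranges over intervals $[x_r,x_s]$ that can be arbitrarily short, on which the empirical averages $\Fh_{rs}$ need not concentrate and which would seem to require a maximal inequality of Kolmogorov type. The key trick that avoids this is to choose $r^*$ and $s^*$ as functions of $x$ (not of the design index $j$), so that Assumption~(A.2) automatically forces $w_{r^*,s}\ge C_2 n\delta_n$ uniformly over all $s\ge j$ appearing in the surviving $\max_s$ (and symmetrically for $\min_r$); the required concentration then reduces to the termwise DKW bound plus the union bound baked into $B_n$.
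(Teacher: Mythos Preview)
Your proposal is correct and takes essentially the same approach as the paper: the good event $B_n$ is the paper's $\{M_n\le (D\log n)^{1/2}\}$ (established via a non-i.i.d.\ DKW inequality due to Bretagnolle and Hu plus the same $O(n^2)$ union bound), your anchor indices $r^*,j$ are the paper's $r(x),j(x)$, and the minimax formula \eqref{eq:minmaxmin.LS} is used identically to reduce to bands with $w_{r^*,s}\ge C_2 n\delta_n$. The only cosmetic difference is in the bias step, where the paper bounds $\bar F_{r(x)s}(y)\le F_{x_{r(x)}}(y)$ in one stroke via antitonicity rather than your two-case split into $k\le j$ and $k>j$.
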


Concerning estimated quantiles, we combine Assumptions~(A.1--2) with a growth condition on the conditional distribution functions $F_x$:

\paragraph{(A.3)}
For some numbers $0\leq \beta_1 < \beta_2 \leq 1$ and $\kappa>0$,
\[
	F_x(y_2)-F_x(y_1) \ \geq \ \kappa(y_2-y_1),
\]
for arbitrary $x\in I$ and $y_1,y_2\in \R$ such that $y_1 < y_2$ and $F_x(y_1), F_x(y_2-) \in (\beta_1, \beta_2)$.

For instance, if each $F_x$, $x \in I$, has a density $f_x$ such that
\[
	\kappa := \inf_{x \in I} \ \inf_{y \, : \, \beta_1 < F_x(y) < \beta_2} \, f_x(y)
	\ > \ 0 ,
\]
then (A.3) is satisfied with the latter parameter $\kappa$.

\begin{Theorem}
\label{Thm:Q_Asymptotics}
Suppose that Assumptions~(A.1--3) are satisfied with $J = \R$ in (A.1). Then, for any plug-in estimator $(\hat{Q}_{nx})_{x \in \XX}$ of $(Q_x)_{x \in \XX}$,
\[
	\lim_{n\to \infty}	
	\Pr\Bigl(
		\sup_{x\in I_n,\beta\in B_n} \, 
			\bigl| \hat{Q}_{nx}(\beta) - Q_x(\beta) \bigr|
		\ > \ 
		\kappa^{-1}C \rho_n^{\alpha/(2\alpha+1)} 
	\Bigr)
	\ = \ 0,
\]
where $I_n \subset I$ and $C = C(C_1,C_2,C_3)$ are defined as in Theorem~\ref{Thm:F_Asymptotics}, and $B_n$ denotes the interval $(\beta_1 + C\rho_n^{\alpha/(2\alpha + 1)}, \beta_2 - C\rho_n^{\alpha/(2\alpha + 1)})$.
\end{Theorem}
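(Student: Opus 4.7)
The plan is to build on Theorem~\ref{Thm:F_Asymptotics}. Let $\varepsilon_n := C\rho_n^{\alpha/(2\alpha+1)}$ and let $E_n$ be the event on which $\sup_{x \in I_n, y \in \R} |\hat{F}_{nx}(y) - F_x(y)| \le \varepsilon_n$; by Theorem~\ref{Thm:F_Asymptotics}, $\Pr(E_n) \to 1$. It therefore suffices to show that on $E_n$, uniformly over $x \in I_n$ and $\beta \in B_n$, $|\hat{Q}_{nx}(\beta) - Q_x(\beta)|$ is of order $\kappa^{-1}\rho_n^{\alpha/(2\alpha+1)}$; any multiplicative loss in constants can be absorbed into an enlarged $C$ appearing in both theorems.

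The core step is an ``inverse Lipschitz'' estimate: for any distribution function $G$ with $\sup_y |G(y) - F_x(y)| \le \varepsilon_n$ and any $\beta \in B_n$,
\[
  G^{-1}(\beta),\ G^{-1}(\beta\,+) \ \in \ \bigl[\, Q_x(\beta) - \kappa^{-1}\varepsilon_n,\ Q_x(\beta) + \kappa^{-1}\varepsilon_n \,\bigr].
\]
I would first observe that (A.3) combined with $\beta \in (\beta_1,\beta_2)$ forces uniqueness of the $\beta$-quantile $Q_x(\beta) = F_x^{-1}(\beta)$: otherwise $F_x$ would be constant at value $\beta \in (\beta_1,\beta_2)$ on a nondegenerate interval, contradicting the slope bound of (A.3) there. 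For the upper inequality, set $q^+ := Q_x(\beta) + \kappa^{-1}\varepsilon_n$ and apply (A.3) to $(y_1,y_2) = (Q_x(\beta), q^+)$ to obtain $F_x(q^+) \ge F_x(Q_x(\beta)) + \varepsilon_n \ge \beta + \varepsilon_n$; the boundary sub-cases in which $F_x(y_1)$ or $F_x(y_2-)$ escape $(\beta_1,\beta_2)$ correspond to a jump of $F_x$ above $\beta_2$ and hand us $F_x(q^+) \ge \beta_2 \ge \beta + \varepsilon_n$ for free. Hence $G(q^+) \ge \beta$, so $G^{-1}(\beta) \le q^+$; the same holds for $G^{-1}(\beta\,+)$ by one further application of (A.3) at $y > q^+$ yielding strict inequality. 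The lower half is obtained symmetrically by applying (A.3) to $(y_1,y_2) = (Q_x(\beta) - \kappa^{-1}\varepsilon_n,\ Q_x(\beta) - \eta)$ and sending $\eta \downarrow 0$; the perturbation is required because (A.3) controls $F_x(y_2)$ rather than $F_x(y_2-)$, while $F_x(Q_x(\beta))$ itself may strictly exceed $\beta$.

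Applied to $G = \hat{F}_{nx_j}$ at each design point $x_j \in I_n$, this estimate places both $\hat{F}_{x_j}^{-1}(\beta)$ and $\hat{F}_{x_j}^{-1}(\beta\,+)$ within $\kappa^{-1}\varepsilon_n$ of $Q_{x_j}(\beta)$, and hence so does every plug-in value $\hat{Q}_{nx_j}(\beta)$ by definition. For a general $x \in I_n$, I would sandwich $\hat{Q}_{nx}(\beta)$ between its values at the two nearest design points $\underline{x} \le x \le \overline{x}$ using isotonicity. Assumption~(A.2) guarantees $\overline{x} - \underline{x} = O(\delta_n)$---after, if necessary, shrinking $I_n$ by a single $\delta_n$-strip so that $\underline{x}, \overline{x} \in I_n$, an asymptotically negligible modification---while (A.1) and (A.3) combine (via an argument analogous to the inverse Lipschitz step) to yield the H\"older bound $|Q_{x'}(\beta) - Q_x(\beta)| \le \kappa^{-1} C_1 |x - x'|^\alpha$ on the true quantile function. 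The extra error is therefore of order $\kappa^{-1} C_1 \delta_n^\alpha = O(\kappa^{-1}\rho_n^{\alpha/(2\alpha+1)})$, and is absorbed into the constant.

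The main obstacle is the case analysis in the inverse Lipschitz step: (A.3) controls the slope of $F_x$ only while $F_x$-values lie in the active range $(\beta_1,\beta_2)$, yet $F_x$ may jump at $Q_x(\beta)$, carrying $F_x(Q_x(\beta))$ above $\beta_2$ or $F_x(Q_x(\beta)-)$ below $\beta_1$. Each such configuration must be treated separately, although each boundary case fortunately makes the desired inequality easier rather than harder. A secondary nuisance is the $\delta_n$-shrinkage of $I_n$ needed to keep the sandwiching design points inside the region where Theorem~\ref{Thm:F_Asymptotics} applies.
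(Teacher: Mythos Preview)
Your proposal is correct and shares the paper's core idea: transfer the uniform bound on $\hat{F}_{nx}-F_x$ from Theorem~\ref{Thm:F_Asymptotics} to quantiles via an inverse-Lipschitz estimate driven by Assumption~(A.3). The differences are organizational. The paper factors your inverse-Lipschitz step into two short lemmas: first, if $\|G-F_x\|_\infty\le\Delta$ then $G^{-1}(\beta)\ge F_x^{-1}(\beta-\Delta)$ and $G^{-1}(\beta+)\le F_x^{-1}((\beta+\Delta)+)$; second, under (A.3) the map $\beta\mapsto F_x^{-1}(\beta)$ is $\kappa^{-1}$-Lipschitz on $(\beta_1,\beta_2)$. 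This decomposition sidesteps your case analysis on whether $F_x(y_1)$ or $F_x(y_2-)$ escapes $(\beta_1,\beta_2)$, since the first lemma is a one-line fact about generic distribution functions and all the (A.3) work happens in the second lemma at the level of the true $F_x$ alone.

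More substantively, the paper avoids your sandwiching between neighbouring design points and the attendant shrinkage of $I_n$. Instead it introduces the two extremal piecewise-constant extrapolations $\hat{F}_{nx,1}$ and $\hat{F}_{nx,2}$ (right- and left-continuous step functions through the values $\hat{F}_{nx_j}$), observes that any admissible $\hat{F}_{nx}$ is bracketed by them, and notes that the proof of Theorem~\ref{Thm:F_Asymptotics} already delivers the same uniform bound $\Delta_n$ for both extrapolations over all of $I_n$. Consequently the quantile bound $\kappa^{-1}\Delta_n$ holds at every $x\in I_n$ directly, with no extra H\"older term and no loss in the constant $C$. Your route works too, but pays a small price in the constant and in the domain, whereas the paper's device of working with the extremal extrapolations gives the stated bound exactly.
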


\subsection{Uniform consistency at a single point $x_o$}

In addition to the previous uniform convergence results, one may verify uniform consistency of $\hat{F}_{nx_o}$ and $\hat{Q}_{nx_o}$ for a fixed interior point $x_o$ of $\XX$. These results require similar but weaker assumptions.

\paragraph{(A'.1$\bs{_{x_o}}$)}
For a neighbourhood $U\subset \mathcal{X}$ of $x_o$ and an interval $J \subset \R$, there exist constants $\alpha \in (0,1]$ and $C_1 > 0$ such that
\[
	\sup_{y\in J} \, \bigl| F_x(y) - F_{x_o}(y) \bigr|
	\ \le \ C_1 |x - x_o|^\alpha
	\quad\text{for arbitrary} \ x \in U.
\]

\paragraph{(A'.2$\bs{_{x_o}}$)}
For given constants $C_2, C_3 > 0$, let $A_n$ be the event that
\[
	\frac{w_n([x_o-\delta_n,x_o])}{n \delta_n},
	\frac{w_n([x_o,x_o+\delta_n])}{n \delta_n}
	\ \ge \ C_2
	\quad\text{where}\quad
	\delta_n := C_3 n^{-1/(2\alpha+1)} .
\]
Then,
\[
	\Pr(A_n) \ \to \ 1 .
\]

Under these two assumptions, the following consistency property holds.

\begin{Theorem}
\label{Thm:F_Asymptotics_xo}
Suppose that Assumptions~(A'.1--2$_{x_o}$) are satisfied. Then
\[
	\sup_{y\in J} \, \bigl| \hat{F}_{nx_o}(y) - F_{x_o}(y) \bigr|
	\ = \ O_p \bigl( n^{-\alpha/(2\alpha + 1)} \bigr).
\]
\end{Theorem}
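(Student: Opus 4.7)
The plan is to follow the template of Theorem~\ref{Thm:F_Asymptotics}, exploiting the fact that here $x_o$ is fixed and we do not take a supremum over $x$: the union bound responsible for the $\log n$ factor disappears, which permits the smaller bandwidth $\delta_n = C_3 n^{-1/(2\alpha+1)}$ and the rate without a logarithmic correction. I work throughout on the event $A_n$ from~(A'.2$_{x_o}$), which has probability tending to one. Let $j_\ell,j_r$ be the indices of the design points nearest $x_o$ on the left and right, and let $j_-$ (resp.\ $j_+$) be the smallest (resp.\ largest) index with $x_{j_-}\ge x_o-\delta_n$ (resp.\ $x_{j_+}\le x_o+\delta_n$). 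On $A_n$ all four exist and $j_-\le j_\ell\le j_r\le j_+$. Antitonicity of the extrapolated estimator gives $\hat{F}_{nx_{j_r}}(y)\le\hat{F}_{nx_o}(y)\le\hat{F}_{nx_{j_\ell}}(y)$, and inserting $r=j_-$ into the minimax expression~\eqref{eq:minmaxmin.LS} for $\hat{F}_{nx_{j_\ell}}(y)$ and $s=j_+$ into the maximin expression for $\hat{F}_{nx_{j_r}}(y)$ yields the sandwich
\[
\min_{r\le j_r}\Fh_{r,j_+}(y)\ \le\ \hat{F}_{nx_o}(y)\ \le\ \max_{s\ge j_\ell}\Fh_{j_-,s}(y).
\]

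Next I split each side into bias and variance. For the upper bound, antitonicity of $F_x(y)$ in $x$ together with~(A'.1$_{x_o}$) implies $F_{x_j}(y)\le F_{x_{j_-}}(y)\le F_{x_o}(y)+C_1\delta_n^\alpha$ for every $j\in\{j_-,\ldots,s\}$ and every $s\ge j_\ell$, so
\[
\max_{s\ge j_\ell}\E\Fh_{j_-,s}(y)-F_{x_o}(y)\ \le\ C_1\delta_n^\alpha\ =\ O\bigl(n^{-\alpha/(2\alpha+1)}\bigr),
\]
uniformly in $y\in J$, and a symmetric argument controls the lower side. The design bound~(A'.2$_{x_o}$) gives $w_{j_-,s}\ge w_n([x_o-\delta_n,x_o])\ge C_2 n\delta_n$ for every $s\ge j_r$, and analogously $w_{r,j_+}\ge C_2 n\delta_n$ for every $r\le j_\ell$.

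The main obstacle is the uniform stochastic bound
\[
\sup_{s\ge j_\ell}\sup_{y\in J}\bigl|\Fh_{j_-,s}(y)-\E\Fh_{j_-,s}(y)\bigr|\ =\ O_p\bigl((n\delta_n)^{-1/2}\bigr)
\]
together with its mirror over $r\le j_r$. The key observation is that $w_{j_-,s}\cdot\Fh_{j_-,s}(y)=\sum_{i\,:\,x_{j_-}\le X_i\le x_s}\,1_{[Y_i\le y]}$ is a partial sum of independent Bernoullis. I would invoke a Talagrand/Bernstein-type concentration inequality for the supremum over the VC class $\{(i)\mapsto 1_{[X_i\le x_s,\,Y_i\le y]}:s,y\}$ (a product of two VC-subgraph classes of index~two), applied block by block to the dyadic peeling $\{s:2^k C_2 n\delta_n\le w_{j_-,s}<2^{k+1}C_2 n\delta_n\}$ for $k=0,1,\ldots,O(\log n)$. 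On block~$k$ the variance proxy is of order $2^k n\delta_n$, so concentration gives a subgaussian tail $\exp(-cM^2\cdot 2^k)$ at the threshold $M(n\delta_n)^{-1/2}$; summing the resulting geometric series in~$k$ delivers the desired bound without any additional $\log n$ factor. Combining the $O(\delta_n^\alpha)$ bias with the $O_p((n\delta_n)^{-1/2})$ noise---both of order $n^{-\alpha/(2\alpha+1)}$ by the choice of $\delta_n$---and invoking the sandwich above yields the claimed rate.
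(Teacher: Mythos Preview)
Your proposal is correct and follows the same overall architecture as the paper: sandwich $\hat F_{nx_o}$ via the minimax/maximin formula with one index pinned near $x_o\mp\delta_n$, split into a bias term of order $\delta_n^\alpha$ via (A'.1$_{x_o}$) and antitonicity, and a stochastic term that must be shown to be $O_p((n\delta_n)^{-1/2})$ uniformly over the free index $s$ and over $y$.

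The difference lies in how you obtain that uniform stochastic bound. You propose dyadic peeling in $w_{j_-,s}$ combined with a Talagrand/Bernstein inequality over the VC class of south-west quadrant indicators, then sum the geometric tails over blocks. The paper instead proves a self-contained maximal inequality (Lemma~\ref{Lem:ExpIneq} and Corollary~\ref{Cor:Bretagnolle}): starting from Bretagnolle's DKW-type bound $\Pr(\|\Fh_{rs}-\bar F_{rs}\|_\infty>\eta)\le C_4\exp(-2w_{rs}\eta^2)$ for each fixed pair $(r,s)$, an Ottaviani-style stopping-time argument plus geometric blocking in the sample size upgrades this directly to
\[
\Pr\Bigl(\sup_{k\ge k_o}\bigl\|\hat\F_k-\bar F_k\bigr\|_\infty\ge\eta\Bigr)\ \le\ D''\exp(-D'k_o\eta^2)
\]
for any $D'<2$, applied with $k_o=w_{r_nj_n}\ge C_2n\delta_n$. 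Both routes avoid the $\log n$ loss by the same mechanism (summing subgaussian tails over geometrically growing blocks); the paper's is more elementary in that it needs only the scalar DKW inequality and no VC or Talagrand machinery, whereas your empirical-process argument is heavier but would transfer more readily to settings where the indicator class is replaced by a general VC or Donsker class.
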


\paragraph{(A'.3$\bs{_{x_o}}$)}
For some numbers $0\leq \beta_1 < \beta_2 \leq 1$ and $\kappa>0$,
\[
	F_{x_o}(y_2)-F_{x_o}(y_1) \ \geq \ \kappa(y_2-y_1),
\]
for arbitrary $y_1,y_2\in \R$ such that $y_1 < y_2$ and $F_{x_o}(y_1), F_{x_o}(y_2-) \in (\beta_1, \beta_2)$.

\begin{Theorem}
\label{Thm:Q_Asymptotics_xo}
Suppose that Assumptions~(A.1--3$_{x_o}$) are satisfied with $J = \R$ in (A.1$_{x_o}$). Then, for any plug-in estimator $(\hat{Q}_{nx})_{x \in \XX}$ of $(Q_x)_{x \in \XX}$,
\[
	\sup_{\beta \in B_n} \,
		\bigl| \hat{Q}_{nx_o}(\beta) - Q_{x_o}(\beta) \bigr|
	\ = \ O_p \bigl( n^{-\alpha/(2\alpha + 1)} \bigr),
\]
where $B_n := (\beta_1 + \Delta_n, \beta_2 - \Delta_n)$ and $\Delta_n=\mathcal{O}(n^{-\alpha/(2\alpha+1)})$.
\end{Theorem}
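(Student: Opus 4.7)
The strategy is to derive this theorem from Theorem~\ref{Thm:F_Asymptotics_xo} by inverting the uniform CDF rate with the help of the growth hypothesis (A'.3$_{x_o}$), in complete parallel to the way Theorem~\ref{Thm:Q_Asymptotics} is deduced from Theorem~\ref{Thm:F_Asymptotics}. The key identity is the plug-in sandwich from Section~\ref{subsec:Qx.plug-in},
\[
	\hat{F}_{nx_o}^{-1}(\beta) \ \le \ \hat{Q}_{nx_o}(\beta) \ \le \ \hat{F}_{nx_o}^{-1}(\beta\,+) ,
\]
so it suffices to control $\hat{F}_{nx_o}^{-1}(\beta\,\pm)$ uniformly in $\beta \in B_n$.

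Given $\varepsilon > 0$, I would choose $K$ so large that, by Theorem~\ref{Thm:F_Asymptotics_xo} applied with $J = \R$, the event $E_n := \{\sup_{y \in \R} |\hat{F}_{nx_o}(y) - F_{x_o}(y)| \le K\epsilon_n\}$, with $\epsilon_n := n^{-\alpha/(2\alpha + 1)}$, has probability at least $1-\varepsilon$ for all large $n$. Set $\eta_n := 2K\epsilon_n / \kappa$ and $\Delta_n := 3K\epsilon_n$, both of the required order $O(\epsilon_n)$, and work on $E_n$ with an arbitrary $\beta \in B_n$.

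For the upper bound, I would show that $\hat{F}_{nx_o}(Q_{x_o}(\beta) + \eta_n) > \beta$, which forces $\hat{Q}_{nx_o}(\beta) \le \hat{F}_{nx_o}^{-1}(\beta\,+) \le Q_{x_o}(\beta) + \eta_n$. If both $F_{x_o}(Q_{x_o}(\beta))$ and $F_{x_o}((Q_{x_o}(\beta) + \eta_n)-)$ lie in $(\beta_1, \beta_2)$, then (A'.3$_{x_o}$) applied with $y_1 = Q_{x_o}(\beta)$ and $y_2 = Q_{x_o}(\beta) + \eta_n$ gives $F_{x_o}(y_2) \ge \beta + \kappa \eta_n$, and on $E_n$ this yields $\hat{F}_{nx_o}(y_2) \ge \beta + \kappa\eta_n - K\epsilon_n = \beta + K\epsilon_n > \beta$. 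Otherwise $F_{x_o}$ has already crossed $\beta_2$ somewhere in $[Q_{x_o}(\beta), y_2]$, so $F_{x_o}(y_2) \ge \beta_2 > \beta + \Delta_n$, and on $E_n$ this again gives $\hat{F}_{nx_o}(y_2) > \beta + (\Delta_n - K\epsilon_n) > \beta$. The mirror argument at $Q_{x_o}(\beta) - \eta_n$ yields $\hat{F}_{nx_o}(Q_{x_o}(\beta) - \eta_n) < \beta$, hence $\hat{Q}_{nx_o}(\beta) \ge \hat{F}_{nx_o}^{-1}(\beta) > Q_{x_o}(\beta) - \eta_n$.

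All bounds are uniform in $\beta \in B_n$, so on $E_n$ one obtains $\sup_{\beta \in B_n} |\hat{Q}_{nx_o}(\beta) - Q_{x_o}(\beta)| \le \eta_n = O(\epsilon_n)$, proving the claim. The main obstacle is the bookkeeping at the edges of the band: the three magnitudes $K\epsilon_n$, $\Delta_n$ and $\kappa \eta_n$ must be coordinated so that (A'.3$_{x_o}$) is directly applicable in the core case and its failure in the exceptional case (where $F_{x_o}$ jumps or leaves $(\beta_1, \beta_2)$ near $Q_{x_o}(\beta)$) is absorbed by the buffer $\Delta_n$. Once the constants are chosen consistently as above, the inversion goes through uniformly and delivers the claimed $O_p(n^{-\alpha/(2\alpha + 1)})$ rate.
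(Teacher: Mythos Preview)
Your proposal is correct and follows essentially the same route as the paper. The paper's own proof is a one-liner: it says to repeat the argument for Theorem~\ref{Thm:Q_Asymptotics} with $I_n$ replaced by $\{x_o\}$ and $\rho_n$ by $n^{-1}$, i.e., invoke Theorem~\ref{Thm:F_Asymptotics_xo} and invert the uniform CDF bound via Lemmas~\ref{Lem:Quant_Growth} and~\ref{Lem:Quant_Continuity}. You carry out the same inversion by hand (showing $\hat{F}_{nx_o}(Q_{x_o}(\beta)\pm\eta_n)\gtrless\beta$ and splitting into a core case and an edge case), which amounts to re-deriving those two lemmas in situ rather than citing them; the substance is identical.
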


\section{Proofs and technical details}
\label{sec:Proofs}

\subsection{Monotone regression}
\label{subsec:Monotone.regression}

In this section we review isotonic regression on a totally ordered set in a rather general setting, summarizing and extending results of numerous authors. Our main goal is a thorough understanding of isotonic regression in situations with potentially non-unique solutions. For extensions to partially ordered sets we refer to \cite{Muehlemann2019}.

The starting point are $m \ge 2$ loss functions $R_1, \ldots, R_m : \R \to \R$ with the following property: For arbitrary indices $1 \le a \le b \le m$, the function
\[
	R_{ab} \ := \ \sum_{j=a}^b R_j
\]
is minimal on a compact interval $[L_{ab}, U_{ab}] \subset \R$, strictly antitonic on $(-\infty,L_{ab}]$ and strictly isotonic on $[U_{ab},\infty)$.

This property is satisfied if all functions $R_j$ are convex with $R_j(x) \to \infty$ as $|x| \to \infty$. It implies a refined version of the so-called Cauchy-mean-value property.

\begin{Proposition}
\label{Prop:Cauchy.mean.value}
Let $\{a,\ldots,b\} \subset \{1,\ldots,m\}$ be partitioned into $k \ge 2$ index intervals $\{a_1,\ldots,b_1\}, \ldots, \{a_k,\ldots,b_k\}$. Then
\[
	\min_{1 \le i \le k} L_{a_ib_i}
	\ \le \ L_{ab} \ \le \ \max_{1 \le i \le k} L_{a_ib_i}
	\quad\text{and}\quad
	\min_{1 \le i \le k} U_{a_ib_i}
	\ \le \ U_{ab} \ \le \ \max_{1 \le i \le k} U_{a_ib_i} .
\]
\end{Proposition}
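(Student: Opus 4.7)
My plan is to dispatch each of the four inequalities by a short contradiction argument, comparing $R_{ab}$ evaluated at two carefully chosen points and then invoking in each case either the strictness built into the hypothesis on $R_{ab}$ itself or the strictness built into the hypotheses on the block sums $R_{a_i b_i}$.

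For the upper bound $L_{ab} \le \max_i L_{a_i b_i}$, I would let $L^{\ast} := \max_i L_{a_i b_i}$, assume for contradiction that $L^{\ast} < L_{ab}$, and compare $R_{ab}$ at $L^{\ast}$ and at $L_{ab}$. On the one hand, the hypothesis applied to $R_{ab}$ itself — strict antitonicity on $(-\infty,L_{ab}]$ — forces $R_{ab}(L^{\ast}) > R_{ab}(L_{ab})$. On the other hand, for each block $i$ one has $L_{a_i b_i} \le L^{\ast} < L_{ab}$, so both evaluation points lie in $[L_{a_i b_i},\infty)$, the region where $R_{a_i b_i}$ is non-decreasing (constant on the minimum interval, strictly isotonic after $U_{a_i b_i}$). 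Hence $R_{a_i b_i}(L^{\ast}) \le R_{a_i b_i}(L_{ab})$ for every $i$, and summing over $i$ produces $R_{ab}(L^{\ast}) \le R_{ab}(L_{ab})$, contradicting the earlier strict inequality.

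For the lower bound $L_{ab} \ge \min_i L_{a_i b_i}$, I would set $L_{\ast} := \min_i L_{a_i b_i}$ and suppose $L_{ab} < L_{\ast}$. Now $L_{ab} < L_{\ast} \le L_{a_i b_i}$ for every $i$, which places both candidate points in the strict antitonicity region of each $R_{a_i b_i}$. Summing the strict inequalities $R_{a_i b_i}(L_{ab}) > R_{a_i b_i}(L_{\ast})$ gives $R_{ab}(L_{ab}) > R_{ab}(L_{\ast})$, contradicting the fact that $L_{ab}$ is a minimizer of $R_{ab}$.

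The two inequalities for $U_{ab}$ follow by the mirror-image argument, which one can make formal by passing to the reflected loss functions $x \mapsto R_j(-x)$: this swaps the roles of $L$ and $U$ and of antitonicity and isotonicity, so that the already proven statements about $L_{ab}$ translate directly into $\min_i U_{a_i b_i} \le U_{ab} \le \max_i U_{a_i b_i}$. The only point demanding care — and where I expect the write-up to need attention rather than real ingenuity — is bookkeeping the source of the strict inequality in each contradiction: sometimes it must come from the strict half of the monotonicity hypothesis on $R_{ab}$ itself, and sometimes from summing strict pointwise inequalities for the individual $R_{a_i b_i}$. Verifying that each choice of comparison points genuinely lies in a strictly monotone region is the main, and essentially only, obstacle.
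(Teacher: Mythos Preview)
Your proof is correct and follows essentially the same idea as the paper: both arguments rest on the observation that $R_{ab} = \sum_i R_{a_ib_i}$ is strictly antitonic on $(-\infty,\min_i L_{a_ib_i}]$ and isotonic on $[\max_i L_{a_ib_i},\infty)$, which pins $L_{ab}$ between these two extremes. The paper phrases this directly via the characterization of $L_{ab}$ as the largest $r$ with strict antitonicity on $(-\infty,r]$ and the smallest $s$ with isotonicity on $[s,\infty)$, whereas you reach the same conclusion through two-point comparisons and contradiction; the mathematical content is the same.
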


\begin{proof}[\textbf{Proof}]
The smallest minimizer $L_{ab}$ of $R_{ab}$ is the largest real number $r$ such that $R_{ab}$ is strictly antitonic on $(-\infty,r]$ and the smallest real number $s$ such that $R_{ab}$ is isotonic on $[s,\infty)$. Since $R_{ab} = \sum_{i=1}^k R_{a_ib_i}$, this function is strictly antitonic on $\bigcap_{1\le i \le k} (-\infty,L_{a_ib_i}] = \bigl( - \infty, \min_{1 \le i \le k} L_{a_ib_i} \bigr]$ and isotonic on $\bigcap_{1\le i \le k} [L_{a_ib_i}, \infty) = \bigl[ \max_{1 \le i \le k} L_{a_ib_i}, \infty \bigr)$. This yields the desired inequalities for $L_{ab}$. The largest minimizer $U_{ab}$ can be handled analogously.
\end{proof}

Now we consider the function $T : \R^m \to \R$,
\[
	T(\bs{x}) \ := \ \sum_{j=1}^m R_j(x_j)
\]
and the set
\[
	\mathcal{Q} \ := \ \argmin_{\bs q\in\Rmua} \, T(\bs{q}) .
\]
The elements of $\mathcal{Q}$ can be characterized completely in terms of the minimizers of the functions $R_{ab}$. Throughout the sequel, we set $x_0 := -\infty$ and $x_{m+1} := \infty$ for a vector $\bs{x} \in \Rmua$. Moreover, the componentwise minimum and maximum of vectors $\bs{x}, \bs{y} \in \R^m$ are denoted by $\min(\bs{x},\bs{y})$ and $\max(\bs{x},\bs{y})$, respectively.

\begin{Proposition}
\label{Prop:Characterization}
For a vector $\bs{x} \in \Rmua$, the following two properties are equivalent:

\noindent
\textbf{(i)} \ $\bs{x} \in \mathcal{Q}$.

\noindent
\textbf{(ii)} \ For arbitrary indices $1 \le a \le b \le m$,
\begin{align*}
	x_a \ \le \ U_{ab} &\quad\text{if} \ x_{a-1} < x_a , \\
	x_b \ \ge \ L_{ab} &\quad\text{if} \ x_b < x_{b+1} .
\end{align*}
\end{Proposition}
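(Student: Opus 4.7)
Plan: Both directions rely on perturbation arguments combined with the Cauchy-mean-value property (Proposition~\ref{Prop:Cauchy.mean.value}). I first decompose $\bs{x}$ into its maximal constant blocks $[p_l,q_l]$, $1 \le l \le k$, with strictly increasing values $c_1 < \cdots < c_k$, using the convention $c_0 := -\infty$, $c_{k+1} := +\infty$. The hypothesis $x_{a-1} < x_a$ in (ii) forces $a = p_l$ for some $l$, and $x_b < x_{b+1}$ forces $b = q_l$ for some $l$, so it suffices to work with block endpoints.

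For the direction $(i) \Rightarrow (ii)$, I treat the within-block and cross-block cases in turn. For $a = p_l$ and $b \in [p_l, q_l]$, the perturbation $\bs{y}$ with $y_j := c_l - \epsilon$ on $[p_l, b]$ and $y_j := x_j$ elsewhere lies in $\Rmua$ for small $\epsilon > 0$ (since $c_{l-1} < c_l$ and $x_{b+1} \ge c_l$). Optimality of $\bs{x}$ yields $R_{p_l b}(c_l - \epsilon) \ge R_{p_l b}(c_l)$, and the strict isotonicity of $R_{p_l b}$ on $[U_{p_l b}, \infty)$ forces $c_l \le U_{p_l b}$; an analogous upward perturbation on $[a, q_l]$ gives $c_l \ge L_{a q_l}$ for $a \in [p_l, q_l]$. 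For the cross-block case $b \in B_{l'}$ with $l' > l$, Proposition~\ref{Prop:Cauchy.mean.value} applied to the partition of $[p_l, b]$ into $B_l, B_{l+1}, \ldots, B_{l'-1}, [p_{l'}, b]$ gives
\[
	U_{p_l b} \ \ge \ \min\bigl(U_{p_l q_l}, U_{p_{l+1} q_{l+1}}, \ldots, U_{p_{l'-1} q_{l'-1}}, U_{p_{l'} b}\bigr).
\]
By the within-block step, $U_{p_i q_i} \ge c_i$ for $i \in \{l, \ldots, l'-1\}$ and $U_{p_{l'} b} \ge c_{l'}$; since $c_l < c_{l+1} < \cdots < c_{l'}$, each argument of the minimum is at least $c_l$, yielding $U_{p_l b} \ge c_l$. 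The inequality $x_b \ge L_{ab}$ is treated symmetrically.

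For the direction $(ii) \Rightarrow (i)$, taking $a = p_l$, $b = q_l$ in the two inequalities of (ii) yields $L_{p_l q_l} \le c_l \le U_{p_l q_l}$, so $R_{p_l q_l}(c_l) = \min R_{p_l q_l}$ and $T(\bs{x}) = \sum_l \min R_{p_l q_l}$. Given any $\bs{y} \in \Rmua$, I aim to show $T(\bs{y}) \ge T(\bs{x})$ by comparing block-by-block. The finer consequences of (ii) -- namely $c_l \in [L_{p_l b}, U_{p_l b}]$ for every $b \in [p_l, q_l]$ and $c_l \in [L_{a q_l}, U_{a q_l}]$ for every $a \in [p_l, q_l]$ -- certify that no within-block variation of $\bs{y}$ about the common value $c_l$ can strictly reduce $T$; once $\bs{y}$ has been replaced by a competitor constant on each block of $\bs{x}$ with isotonic values $d_1 \le \cdots \le d_k$, the bound $T(\bs{y}) \ge T(\bs{x})$ follows from $R_{B_l}(d_l) \ge \min R_{B_l}$ applied to each block.

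The main obstacle I expect is making the reduction step in $(ii) \Rightarrow (i)$ rigorous: while the finer within-block conditions of (ii) intuitively rule out any non-constant behavior of $\bs{y}$ on a block of $\bs{x}$, converting this into a clean monotonicity inequality requires careful bookkeeping. I anticipate handling it by induction on the number of blocks, or by a successive pool-adjacent-violators–type reduction that invokes the appropriate sub-interval instance of (ii) at each smoothing step to guarantee that $T$ does not strictly increase under the reduction.
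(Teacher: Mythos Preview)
Your proposal is essentially correct and follows the same strategy as the paper: perturbation arguments for the necessity direction, a PAVA-type reduction for sufficiency, and the Cauchy mean-value property (Proposition~\ref{Prop:Cauchy.mean.value}) to pass between within-block and cross-block conditions. The paper packages this slightly differently by introducing an intermediate condition~(ii') restricted to index intervals on which $\bs{x}$ is constant, proving (i)~$\Leftrightarrow$~(ii') directly and then (ii')~$\Leftrightarrow$~(ii) via Proposition~\ref{Prop:Cauchy.mean.value}; your ``within-block step'' and ``cross-block step'' are precisely these two pieces, just interleaved.

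One correction: the ``finer consequences of (ii)'' you invoke are slightly overstated. Condition~(ii) yields $c_l \le U_{p_l b}$ for every $b \in [p_l,q_l]$ (from the first line, since $x_{p_l-1} < x_{p_l}$) and $c_l \ge L_{a q_l}$ for every $a \in [p_l,q_l]$ (from the second line, since $x_{q_l} < x_{q_l+1}$), but it does \emph{not} in general give $c_l \ge L_{p_l b}$ for $b < q_l$; e.g.\ with $R_1(t)=(t-2)^2$, $R_2(t)=t^2$, $R_3(t)=(t-10)^2$ one has $\bs{x}=(1,1,10)$ yet $L_{11}=2>1=c_1$. Fortunately only the two one-sided families above are needed for the PAVA-type reduction, and for that step the paper's concrete realization---showing that replacing $\bs{y}$ first by $\min(\bs{x},\bs{y})$ and then by $\max(\bs{x},\bs{y})=\bs{x}$ never increases $T$, via a left-to-right sweep that at each stage pools the rightmost constant block of the current $\bs{y}$ down to $\max(x_c,y_{b-1})$---is exactly the ``successive pool-adjacent-violators--type reduction'' you anticipate.
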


This characterization is a generalization of Theorem~8.1 of \cite{Duembgen2009}.

\begin{proof}[\textbf{Proof of Proposition~\ref{Prop:Characterization}}]
We first show that property~(i) is equivalent to a seemingly weaker version of (ii):

\noindent
\textbf{(ii')} \ For arbitrary indices $1 \le a \le b \le m$,
\begin{align*}
	x_a \ \le \ U_{ab} &\quad\text{if} \ x_{a-1} < x_a = x_b , \\
	x_b \ \ge \ L_{ab} &\quad\text{if} \ x_a = x_b < x_{b+1} .
\end{align*}

Suppose that property~(ii') is violated. Specifically, for some indices $1 \le a \le b \le m$, let $x_{a-1} < x_a = x_b$ but $x_a > U_{ab}$. Since $R_{ab}$ is strictly isotonic on $[U_{ab},\infty)$,
\[
	\tilde{x}_j \ := \ \begin{cases}
		x_j & \text{if} \ j < a \ \text{or} \ j > b \\
		\max(x_{a-1}, U_{ab}) & \text{if} \ a \le j \le b
	\end{cases}
\]
defines a vector $\tilde{\bs{x}} \in \Rmua$ such that $T(\tilde{\bs{x}}) < T(\bs{x})$. Analogously, if $x_a = x_b < x_{b+1}$ but $x_b < L_{ab}$, one can find a vector $\tilde{\bs{x}} \in \Rmua$ such that $T(\tilde{\bs{x}}) < T(\bs{x})$. This shows that property~(i) implies property~(ii').

Suppose that property~(ii') is satisfied, and let $\bs{y}$ be an arbitrary vector in $\Rmua$. If $y_j > x_j$ for some index $j$, let $a$ be the smallest such index, and let $c$ be the largest index with $x_c = x_a$. Thus $x_a = x_c < x_{c+1}$ and $y_{a-1} \le x_a < y_a \le y_c$. Now we repeat the following step until $y_c = x_c$: We choose the smallest index $b$ such that $y_b = y_c$. Property~(ii') implies that $x_c \ge L_{bc}$, so $R_{bc}$ is isotonic on $[x_c,\infty)$. Consequently, if we replace $y_b,\ldots,y_c$ with the smaller number $\max(x_c,y_{b-1})$, the value $T(\bs{y})$ does not increase. These considerations show that replacing $y_a,\ldots,y_c$ with $x_a = x_c$ yields a new vector $\bs{y} \in \Rmua$ with the same or a smaller value of $T(\bs{y})$. Repeating this construction finitely often shows that replacing $\bs{y}$ with $\min(\bs{x},\bs{y})$ does not increase $T(\bs{y})$. Analogously one can show that replacing $\bs{y}$ with $\max(\bs{x},\bs{y})$ does not increase $T(\bs{y})$. Combining both steps shows that the original vector $\bs{y}$ satisfies the inequality $T(\bs{y}) \ge T(\bs{x})$. Hence $\bs{x}$ belongs to $\mathcal{Q}$.

It remains to show equivalence of properties~(ii) and (ii'). The latter is obviously a consequence of the former one. Hence it suffices to show that a violation of property~(ii) implies a violation of (ii'). Consider indices $1 \le a \le b \le m$ such that $x_{a-1} < x_a$ but $x_a > U_{ab}$. In case of $x_b = x_a$, this is a violation of property~(ii). In case of $x_a < x_b$ we partition $\{a,\ldots,b\}$ into maximal index intervals $\{a_1,\ldots,b_1\}, \ldots, \{a_k,\ldots,b_k\}$ on which $j \mapsto x_j$ is constant. Then $x_a = \min_{1 \le i \le k} x_{a_i}$, whereas Proposition~\ref{Prop:Cauchy.mean.value} yields the inequality $U_{ab} \ge \min_{1 \le i \le k} U_{a_ib_i}$. Hence for some index $i$, $x_{a_i-1} < x_{a_i} = x_{b_i}$ but $x_{a_i} > U_{a_ib_i}$, a violation of (ii). The situation that $x_b < x_{b-1}$ but $x_b < L_{ab}$ can be handled analogously.
\end{proof}

Proposition~\ref{Prop:Characterization} implies already an interesing property of the set $\mathcal{Q}$.

\begin{Corollary}
\label{Cor:Pointwise.min.max}
If $\bs{x}^{(1)}, \bs{x}^{(2)} \in \mathcal{Q}$, then $\min(\bs{x}^{(1)},\bs{x}^{(2)})$ and $\max(\bs{x}^{(1)},\bs{x}^{(2)})$ belong to $\mathcal{Q}$ as well.
\end{Corollary}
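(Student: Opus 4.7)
The plan is to piggyback on an intermediate observation that is already established, without being isolated, inside the proof of Proposition~\ref{Prop:Characterization}. In the paragraph showing that property~(ii') implies property~(i), the authors argue that for any $\bs{x}$ satisfying (ii') and any competitor $\bs{y} \in \Rmua$, one has
\[
    T\bigl(\min(\bs{x},\bs{y})\bigr) \ \le \ T(\bs{y})
    \quad\text{and}\quad
    T\bigl(\max(\bs{x},\bs{y})\bigr) \ \le \ T(\bs{y}) .
\]
I will re-state this as the key lemma and cite the relevant paragraph in the proof of Proposition~\ref{Prop:Characterization}. Since (i) and (ii') are equivalent, the above display applies whenever $\bs{x} \in \mathcal{Q}$.

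Next I need to check that $\min(\bs{x}^{(1)}, \bs{x}^{(2)})$ and $\max(\bs{x}^{(1)}, \bs{x}^{(2)})$ lie in $\Rmua$. This is a one-line observation: if $x_j^{(k)} \le x_{j+1}^{(k)}$ for $k = 1,2$, then both $\min(x_j^{(1)}, x_j^{(2)}) \le \min(x_{j+1}^{(1)}, x_{j+1}^{(2)})$ and $\max(x_j^{(1)}, x_j^{(2)}) \le \max(x_{j+1}^{(1)}, x_{j+1}^{(2)})$ follow directly.

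Now the corollary falls out. Apply the key inequality with $\bs{x} = \bs{x}^{(1)} \in \mathcal{Q}$ and $\bs{y} = \bs{x}^{(2)} \in \mathcal{Q}$ to get
\[
    T\bigl(\min(\bs{x}^{(1)}, \bs{x}^{(2)})\bigr) \ \le \ T(\bs{x}^{(2)}) \ = \ \min_{\bs{q} \in \Rmua} T(\bs{q}) .
\]
Because $\min(\bs{x}^{(1)}, \bs{x}^{(2)}) \in \Rmua$, equality must hold, so $\min(\bs{x}^{(1)}, \bs{x}^{(2)}) \in \mathcal{Q}$. The argument for $\max(\bs{x}^{(1)}, \bs{x}^{(2)})$ is symmetric, using the analogous inequality for the maximum.

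The only real obstacle is presentational rather than mathematical: the inequality $T(\min(\bs{x},\bs{y})) \le T(\bs{y})$ is not stated as a numbered lemma in the excerpt, so I need to either refer to the specific sentence inside the proof of Proposition~\ref{Prop:Characterization} or briefly re-derive it by the same iterative "flatten $y_a,\ldots,y_c$ down to $x_a$" procedure driven by property~(ii'). If a fully self-contained route were preferred, one could instead try to verify condition~(ii) of Proposition~\ref{Prop:Characterization} directly for the componentwise minimum and maximum; however, this quickly becomes fiddly because a strict inequality such as $z_{a-1} < z_a$ for $\bs{z} = \min(\bs{x}^{(1)}, \bs{x}^{(2)})$ need not translate into a strict inequality $x^{(k)}_{a-1} < x^{(k)}_a$ for the chosen coordinate realizing the minimum, and one must then recur on contracted index intervals and invoke Proposition~\ref{Prop:Cauchy.mean.value}. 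Route~(b) via the functional inequality avoids all of this, which is why I would go with it.
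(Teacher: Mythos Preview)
Your argument is correct: the inequality $T(\min(\bs{x},\bs{y})) \le T(\bs{y})$ (and its analogue for the maximum) established in the proof of Proposition~\ref{Prop:Characterization} does indeed yield the corollary immediately once you note that the componentwise min and max of isotonic vectors are isotonic.

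However, the paper takes exactly the route you dismissed as ``fiddly'': it verifies condition~(ii) of Proposition~\ref{Prop:Characterization} directly for $\bs{x} = \min(\bs{x}^{(1)},\bs{x}^{(2)})$, and the verification turns out to be short. Your worry---that $x_{a-1} < x_a$ need not give $x^{(k)}_{a-1} < x^{(k)}_a$ for the $k$ realizing the minimum at coordinate $a$---is sidestepped by choosing $k$ to realize the minimum at the \emph{other} coordinate. For the $U_{ab}$ inequality one picks $k$ with $x_{a-1} = x^{(k)}_{a-1}$; then $x^{(k)}_{a-1} = x_{a-1} < x_a \le x^{(k)}_a$ forces the strict inequality for $\bs{x}^{(k)}$, and property~(ii) gives $x_a \le x^{(k)}_a \le U_{ab}$. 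For the $L_{ab}$ inequality one picks $k$ with $x_b = x^{(k)}_b$ and argues symmetrically. No recursion or appeal to Proposition~\ref{Prop:Cauchy.mean.value} is needed. Your functional-inequality route is arguably more conceptual and reuses a step already done, at the cost of pointing into the middle of another proof; the paper's route is self-contained and just as brief once the right index is chosen.
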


\begin{proof}[\textbf{Proof}]
For symmetry reasons it suffices to verify that $\bs{x} := \min(\bs{x}^{(1)},\bs{x}^{(2)}) \in \mathcal{Q}$, and this is equivalent to $\bs{x}$ satisfying property~(iii) in Proposition~\ref{Prop:Characterization}. Let $1 \le a \le b \le m$, and suppose that $x_{a-1} < x_a$. Then for some $k \in \{1,2\}$,
\[
	x_{a-1} = x^{(k)}_{a-1} < x_a \le x_a^{(k)} ,
\]
so property~(iii) of $\bs{x}^{(k)}$ implies that $x_a \le x_a^{(k)} \le U_{ab}$. In case of $x_b < x_{b+1}$, we choose $k \in \{1,2\}$ such that
\[
	x_{b} = x^{(k)}_b < x_{b+1} \le x_{b+1}^{(k)} ,
\]
and then property~(iii) of $\bs{x}^{(k)}$ implies that $x_b = x_b^{(k)} \ge L_{ab}$.
\end{proof}

Now we provide the main result involving min-max and max-min formulae for the set $\mathcal{Q}$.

\begin{Theorem}
\label{Thm:Monotone_Regression}
For any index $1 \le j \le m$,
\begin{align*}
	\ell_j^{(1)} := \max_{a \le j} \, \min_{b \ge j} \, L_{ab} \
		&= \ \ell_j^{(2)} := \min_{b \ge j} \, \max_{a \le j} \, L_{ab}
\intertext{and}
	u_j^{(1)} := \min_{b \ge j} \, \max_{a \le j} \, U_{ab} \
		&= \ u_j^{(2)} := \max_{a \le j} \, \min_{b \ge j} \, U_{ab} .
\end{align*}
This defines vectors $\bs{\ell} = (\ell_j^{(1)})_{j=1}^m$ and $\bs{u} = (u_j^{(1)})_{j=1}^m$ in $\mathcal{Q}$, and any vector $\bs{x} \in \mathcal{Q}$ satisfies $\bs{\ell} \le \bs{x} \le \bs{u}$ componentwise.
\end{Theorem}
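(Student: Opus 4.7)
The proof has three movements, built around the characterization of $\mathcal{Q}$ in Proposition~\ref{Prop:Characterization}. In the first, I bound every $\bs{x}\in\mathcal{Q}$ between $\bs{\ell}^{(2)}$ and $\bs{u}^{(2)}$ componentwise. Fix $\bs{x}\in\mathcal{Q}$ and $j\in\{1,\dots,m\}$, and let $b^\star$ be the largest index in $\{j,\dots,m\}$ with $x_{b^\star}=x_j$. Then either $b^\star=m$ or $x_{b^\star}<x_{b^\star+1}$, so Proposition~\ref{Prop:Characterization}(ii) gives $x_j=x_{b^\star}\ge L_{a,b^\star}$ for every $a\le b^\star$; applying $\max_{a\le j}$ and then $\min_{b\ge j}$ (with $b^\star$ feasible) yields $x_j\ge \ell_j^{(2)}$. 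The dual argument, using the smallest $a^\star\le j$ with $x_{a^\star}=x_j$ together with $x_{a^\star}\le U_{a^\star,b}$, gives $x_j\le u_j^{(2)}$.

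\textbf{Membership.} Next, I verify that $\bs{\ell}^{(1)}$ and $\bs{u}^{(1)}$ themselves satisfy that characterization and hence belong to $\mathcal{Q}$. Isotonicity in $j$ is routine from the monotonicity of $\max/\min$ under enlargement/shrinkage of their index sets. For property~(ii) applied to $\bs{\ell}^{(1)}$, consider a strict jump $\ell_b^{(1)}<\ell_{b+1}^{(1)}$ and suppose for contradiction that $\ell_b^{(1)}<L_{ab}$ for some $a\le b$. Pick $a^\dagger\le b+1$ and $b^\dagger\ge b+1$ attaining the outer max and inner min in $\ell_{b+1}^{(1)}=L_{a^\dagger,b^\dagger}$. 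The identity $\min_{b'\ge b}L_{a',b'}=\min\bigl(L_{a',b},\min_{b'\ge b+1}L_{a',b'}\bigr)$ for each $a'\le b$, together with the assumption $\ell_b^{(1)}<\ell_{b+1}^{(1)}$, constrains how $a^\dagger$ can relate to~$b$; iterated use of Proposition~\ref{Prop:Cauchy.mean.value} on partitions of $\{\min(a,a^\dagger),\dots,b^\dagger\}$ separated at the cuts $a-1$ and~$b$, combined with the extremality of $a^\dagger,b^\dagger$, yields the contradiction $\ell_{b+1}^{(1)}\le\ell_b^{(1)}$. The dual condition $\ell_{a-1}^{(1)}<\ell_a^{(1)}\Rightarrow\ell_a^{(1)}\le U_{ab}$ and the analogous verification for $\bs{u}^{(1)}$ proceed by symmetric arguments.

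\textbf{Identification.} The generic max-min $\le$ min-max inequality gives $\ell_j^{(1)}\le\ell_j^{(2)}$ and $u_j^{(2)}\le u_j^{(1)}$. Applying the first movement to $\bs{x}=\bs{\ell}^{(1)}\in\mathcal{Q}$ and $\bs{x}=\bs{u}^{(1)}\in\mathcal{Q}$ reverses these, so the four expressions collapse to common vectors $\bs{\ell}=\bs{\ell}^{(1)}=\bs{\ell}^{(2)}$ and $\bs{u}=\bs{u}^{(1)}=\bs{u}^{(2)}$; the first movement then reads $\bs{\ell}\le\bs{x}\le\bs{u}$ componentwise for every $\bs{x}\in\mathcal{Q}$. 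The main obstacle is the membership step: the max-min expression only controls the $L_{ab}$'s one-sidedly through its inner minimizers, so locating the index configuration that forces a strict jump in $\bs{\ell}^{(1)}$ calls for a careful case enumeration on where $a^\dagger$ falls relative to~$b$, combined with repeated use of Proposition~\ref{Prop:Cauchy.mean.value} to re-partition index intervals. Once that technical step is established, duality and the standard min-max inequality close the argument.
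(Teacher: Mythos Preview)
Your three-movement plan is exactly the paper's strategy: the generic inequality $\ell_j^{(1)}\le\ell_j^{(2)}$, the bound $\bs{\ell}^{(2)}\le\bs{x}$ for $\bs{x}\in\mathcal{Q}$ via Proposition~\ref{Prop:Characterization}, and $\bs{\ell}^{(1)}\in\mathcal{Q}$ again via that characterization; then combine. Your first movement is even verbatim the paper's argument for its inequality~(\ref{ineq:MR2}).

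The only place your sketch is thinner than the paper is the membership step, and there I would flag one point. For $\bs{\ell}^{(1)}\in\mathcal{Q}$ you must check \emph{both} halves of Proposition~\ref{Prop:Characterization}(ii): the $L$-half $\ell_b^{(1)}\ge L_{ab}$ at a right jump, and the $U$-half $\ell_a^{(1)}\le U_{ab}$ at a left jump. You treat the second as ``symmetric'', but it is not: $\bs{\ell}^{(1)}$ is built entirely from the $L_{ab}$'s, so a direct symmetry argument would produce a statement about $L$'s, not $U$'s. The paper closes this by proving two sharper identities at jump points, namely
\[
\ell_{j-1}^{(1)}<\ell_j^{(1)}\ \Rightarrow\ \ell_j^{(1)}=\min_{b\ge j}L_{jb},
\qquad
\ell_j^{(1)}<\ell_{j+1}^{(1)}\ \Rightarrow\ \ell_j^{(1)}=\max_{a\le j}L_{aj},
\]
from which the $U$-half follows for free via $\ell_a^{(1)}=\min_{b'\ge a}L_{ab'}\le L_{ab}\le U_{ab}$, and the $L$-half via $\ell_b^{(1)}=\max_{a'\le b}L_{a'b}\ge L_{ab}$. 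This also replaces your ``case enumeration on where $a^\dagger$ falls relative to $b$'' by a single clean contrapositive argument using Proposition~\ref{Prop:Cauchy.mean.value}. If you carry out your membership step in full, you will almost certainly rediscover these two identities; they are the efficient way to organize the case analysis you anticipate.
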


\begin{proof}[\textbf{Proof of Theorem~\ref{Thm:Monotone_Regression}}]
For symmetry reasons, if suffices to verify the claims about $\bs{\ell}$. Precisely, with $\bs{\ell}^{(k)} := (\ell_k^{(k)})_{j=1}^m$, we show subsequently that
\begin{align}
\label{ineq:MR1}
	& \bs{\ell}^{(1)} \ \le \ \bs{\ell}^{(2)} , \\
\label{ineq:MR2}
	& \bs{\ell}^{(2)} \ \le \ \bs{x} \ \quad\text{for any} \ \bs{x} \in \mathcal{Q} , \\
\label{eq:MR3}
	& \bs{\ell}^{(1)} \ \in \ \mathcal{Q} .
\end{align}

Inequality~\eqref{ineq:MR1} follows from
\[
	\ell_j^{(1)} \ \le \ \max_{a \le j} \, \min_{b \ge j} \,
		\max_{\tilde{a} \le j} \, L_{\tilde{a}b}
	\ = \ \max_{a \le j} \, \ell_j^{(2)} \ = \ \ell_j^{(2)}
\]
for $1 \le j \le m$.

As to \eqref{ineq:MR2}, for $\bs{x} \in \mathcal{Q}$ and $1 \le j \le m$ let $\tilde{b}$ be the largest index such that $x_{\tilde{b}} = x_j$. Then $x_{\tilde{b}} < x_{\tilde{b}+1}$, so property~(ii) of $\bs{x}$ in Proposition~\ref{Prop:Characterization} implies that
\[
	\ell_j^{(2)} \ \le \ \max_{a \le j} \, L_{a,\tilde{b}} \ \le \ x_{\tilde{b}} \ = \ x_j .
\]

It remains to verify~\eqref{eq:MR3}. For indices $1 \le j < k \le m$,
\[
	\ell_j^{(1)} \ = \ \max_{a \le j} \, \min_{b \ge j} \, L_{ab}
	\ \le \ \max_{a \le j} \, \min_{b \ge k} \, L_{ab}
	\ \le \ \max_{a \le k} \, \min_{b \ge k} \, L_{ab} \ = \ \ell_k^{(1)} ,
\]
whence $\bs{\ell}^{(1)} \in \Rmua$. To show that $\bs{\ell}^{(1)} \in \mathcal{Q}$, it suffices to show that it has property~(iii) in Proposition~\ref{Prop:Characterization}, and this is an immediate consequence of the following two claims: For $1 \le j \le m$,
\begin{align}
\label{ineq:MR3a}
	\ell_{j-1}^{(1)} \ < \ \ell_j^{(1)} \quad&\text{implies that}\quad
		\ell_j^{(1)} \ = \ \min_{b \ge j} \, L_{jb} , \\
\label{ineq:MR3b}
	\ell_j^{(1)} \ < \ \ell_{j+1}^{(1)} \quad&\text{implies that}\quad
		\ell_j^{(1)} \ = \ \max_{a \le j} \, L_{aj} .
\end{align}

As to \eqref{ineq:MR3a}, suppose that the conclusion is wrong, i.e.\ $\ell_j^{(1)} > \min_{b \ge j} L_{jb}$. Then $j > 1$, and for some index $\tilde{a} \le j-1$,
\[
	\ell_j^{(1)} \ = \ \min_{b \ge j} \, L_{\tilde{a}b}
	\ \le \ \min_{b \ge j} \, \max(L_{\tilde{a},j-1}, L_{jb})
	\ = \ \max \Bigl( L_{\tilde{a},j-1}, \min_{b \ge j} \, L_{jb} \Bigr)
	\ = \ L_{\tilde{a},j-1} ,
\]
where we used Proposition~\ref{Prop:Cauchy.mean.value}. But then
\[
	\ell_{j-1}^{(1)} \ \ge \ \min_{b \ge j-1} \, L_{\tilde{a}b}
	\ = \ \min \Bigl( L_{\tilde{a},j-1}, \min_{b \ge j} \, L_{\tilde{a}b} \Bigr)
	\ = \ \ell_j^{(1)} ,
\]
i.e.\ the assumption of \eqref{ineq:MR3a} is wrong as well.

Concerning \eqref{ineq:MR3b}, suppose that that the conclusion is wrong, i.e.\ $\ell_j^{(1)} < L_{\tilde{a}j}$ for some $\tilde{a} \le j$. Then $j < m$, and
\begin{align*}
	\ell_j^{(1)} \
	&\ge \ \min_{b \ge j} \, L_{\tilde{a}b}
		\ = \ \min \Bigl( L_{\tilde{a}j}, \min_{b \ge j+1} \, L_{\tilde{a}b} \Bigr)
		\ = \ \min_{b \ge j+1} \, L_{\tilde{a}b} \\
	&\ge \ \min_{b \ge j+1} \, \min(L_{\tilde{a}j}, L_{j+1,b})
		\ = \ \min \Bigl( L_{\tilde{a}j}, \min_{b \ge j+1} \, L_{j+1,b} \Bigr)
		\ = \ \min_{b \ge j+1} \, L_{j+1,b} .
\end{align*}
Consequently,
\[
	\min_{b\ge j+1} \, L_{j+1,b} \ \le \ \ell_j^{(1)}
	\quad\text{and}\quad
	\min_{b\ge j+1} \, L_{\tilde{a}b} \ \le \ \ell_j^{(1)} .
\]
This is true for any index $\tilde{a} \le j$ with $L_{\tilde{a}j} > \ell_j^{(1)}$. If $a \le j$ is such that $L_{aj} \le \ell_j^{(1)}$, then
\[
	\min_{b \ge j+1} \, L_{ab}
	\ \le \ \min_{b \ge j+1} \, \max(L_{aj}, L_{j+1,b})
	\ = \ \max \Bigl( L_{aj}, \min_{b \ge j+1} \, L_{j+1,b} \Bigr)
	\ \le \ \ell_j^{(1)} .
\]
Thus $\min_{b \ge j+1} L_{aj} \le \ell_j^{(1)}$ for any $a \le j+1$. Consequently, $\ell_{j+1}^{(1)} \le \ell_j^{(1)}$, i.e.\ the assumption of \eqref{ineq:MR3b} is wrong as well.
\end{proof}

We end this subsection with two additional conclusions for the special case of convex functions $R_j$.

\begin{Theorem}
\label{Thm:Monotone_Regression_convex}
Suppose in addition that all loss functions $R_j$ are convex. Then the set $\mathcal{Q}$ is compact and convex. If $\bs{x} \in \Rmua$ is such that $\bs{\ell} \le \bs{x} \le \bs{u}$ and $\{j < m : x_j < x_{j+1}\} \subset \{j < m : \ell_j < \ell_{j+1} \ \text{or} \ u_j < u_{j+1}\}$, then $\bs{x} \in \mathcal{Q}$. Moreover, each function $R_j$ is linear on the interval $[\ell_j,u_j]$.
\end{Theorem}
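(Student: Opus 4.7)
The plan is to handle the three assertions in order: compactness and convexity of $\mathcal{Q}$, then linearity of each $R_j$ on $[\ell_j, u_j]$, and finally the sufficient condition for $\bs{x} \in \mathcal{Q}$.

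First, convexity of $\mathcal{Q}$ is immediate from general principles: $T$ is convex since each $R_j$ is, and $\Rmua$ is a closed convex cone, so $\mathcal{Q} = \argmin_{\Rmua} T$ is convex as the set of minimizers of a convex function on a convex set. Closedness follows from continuity of $T$, and Theorem~\ref{Thm:Monotone_Regression} supplies the enclosure $\bs{\ell} \le \bs{x} \le \bs{u}$ for every $\bs{x} \in \mathcal{Q}$, yielding boundedness and hence compactness.

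Second, I would prove linearity of $R_j$ on $[\ell_j, u_j]$ by exploiting the interpolating segment $\bs{z}_t := (1-t)\bs{\ell} + t\bs{u}$, which lies in $\mathcal{Q}$ for all $t \in [0, 1]$ by convexity. Consequently $t \mapsto T(\bs{z}_t) = \sum_{j=1}^m R_j((1-t)\ell_j + t u_j)$ equals the constant $\min T$, and each summand $\phi_j(t) := R_j((1-t)\ell_j + t u_j)$ is convex in $t$ as a composition with an affine map. A brief Jensen argument — convexity of each $\phi_j$ at an interior point $(1-\lambda)s + \lambda t$ gives an inequality that, summed, becomes an equality and hence must already hold with equality term by term — shows that each $\phi_j$ is affine on $[0, 1]$, so $R_j$ is affine on $[\ell_j, u_j]$.

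Third, for the sufficient condition I would verify property~(ii) of Proposition~\ref{Prop:Characterization}. The key observation is that for $t \in (0, 1)$ the vector $\bs{z}_t \in \mathcal{Q}$ has a jump at position $a$ precisely when $\ell_{a-1} < \ell_a$ or $u_{a-1} < u_a$, since $\bs{\ell}$ and $\bs{u}$ are both non-decreasing. Applying Proposition~\ref{Prop:Characterization}(ii) to $\bs{z}_t$ gives $(z_t)_a \le U_{ab}$ at each such position and for every $b \ge a$; letting $t \to 1$ yields $u_a \le U_{ab}$ whenever $\ell_{a-1} < \ell_a$ or $u_{a-1} < u_a$, and the symmetric $t \to 0$ argument gives $\ell_b \ge L_{ab}$ whenever $\ell_b < \ell_{b+1}$ or $u_b < u_{b+1}$. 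For the candidate $\bs{x}$, the hypothesis puts every jump of $\bs{x}$ among those of $\bs{\ell}$ or $\bs{u}$, so $x_a \le u_a \le U_{ab}$ whenever $x_{a-1} < x_a$, and $x_b \ge \ell_b \ge L_{ab}$ whenever $x_b < x_{b+1}$; Proposition~\ref{Prop:Characterization} then delivers $\bs{x} \in \mathcal{Q}$.

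The main subtlety I anticipate is the interpolation trick in the third step: the inequalities $u_a \le U_{ab}$ in the case where only $\bs{\ell}$ jumps at $a$, and symmetrically $\ell_b \ge L_{ab}$, are not directly furnished by applying Proposition~\ref{Prop:Characterization} to $\bs{\ell}$ or $\bs{u}$ alone, and only become accessible through the convex combination $\bs{z}_t$ combined with a limiting argument.
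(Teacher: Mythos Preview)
Your proposal is correct and follows essentially the same route as the paper: the interpolating segment $\bs{z}_t = (1-t)\bs{\ell} + t\bs{u}$ in $\mathcal{Q}$, application of Proposition~\ref{Prop:Characterization}(ii) to $\bs{z}_t$ for $t \in (0,1)$ with a limit as $t \to 0,1$ to obtain $u_a \le U_{ab}$ and $\ell_b \ge L_{ab}$ at the relevant jump positions, and the constancy of $t \mapsto \sum_j R_j((z_t)_j)$ forcing each convex summand to be affine. The only cosmetic differences are your reordering (linearity before the sufficient condition) and your use of Theorem~\ref{Thm:Monotone_Regression} for boundedness of $\mathcal{Q}$, whereas the paper argues coercivity of $T$ directly.
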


\begin{proof}[\textbf{Proof}]
The general assumptions imply that each function $R_j = R_{jj}$ has a compact set of minimizers. Together with convexity, this implies that $R_j$ is continuous with $R_j(x) \to \infty$ as $|x| \to \infty$. But then, $T : \R^m \to \R$ is a continuous and convex function such that $T(\bs{x}) \to \infty$ as $\|\bs{x}\| \to \infty$. Moreover, $\Rmua$ is a closed convex cone in $\R^m$. This implies that $\mathcal{Q}$ is a compact and convex set.

To verify the remaining statements, consider the vectors $\bs{x}(\lambda) := (1 - \lambda) \bs{\ell} + \lambda \bs{u}$, $\lambda \in [0,1]$. Since $\mathcal{Q}$ is a convex set, all these vectors belong to $\mathcal{Q}$. But for $0 < \lambda < 1$,
\[
	\{j < m : x_j(\lambda) < x_{j+1}(\lambda)\}
	\ = \ \{j < m : \ell_j < \ell_{j+1} \ \text{or} \ u_j < u_{j+1}\} .
\]
Exploiting property~(ii) of $\bs{x}(\lambda)$ in Proposition~\ref{Prop:Characterization} for all $\lambda \in (0,1)$, we may conclude that for arbitrary indices $1 \le a \le b \le m$,
\begin{align*}
	&u_a \ \le \ U_{ab} \quad\text{if} \ \ell_{a-1} < \ell_a \ \text{or} \ u_{a-1} < u_a , \\
	&\ell_b \ \ge \ L_{ab} \quad\text{if} \ \ell_b < \ell_{b+1} \ \text{or} \ u_b < u_{b+1} .
\end{align*}
In particular, any vector $\bs{x} \in \Rmua$ such that $\bs{\ell} \le \bs{x} \le \bs{u}$ and $\{j < m : x_j < x_{j+1}\}$ is a subset of $\{j < m : \ell_j < \ell_{j+1} \ \text{or} \ u_j < u_{j+1}\}$ satisfies property~(iii) in Proposition~\ref{Prop:Characterization}. Hence $\bs{x} \in \mathcal{Q}$.

Finally, since
\[
	T_\beta(\bs{q}(\lambda)) \ = \ \sum_{j=1}^m R_j\bigl((1-\lambda)\ell_j + \lambda u_j\bigr) 
\]
is constant in $\lambda\in[0,1]$, each summand $R_j\bigl((1-\lambda)\ell_j + \lambda u_j\bigr)$ has to be linear in $\lambda \in [0,1]$, which is equivalent to $R_j$ being linear on $[\ell_j,u_j]$.
\end{proof}

\subsection{Proofs of Lemma~\ref{lem:Qhat.RQ} and \ref{lem:Qhat.LS}}

\begin{proof}[\textbf{Proof of Lemma~\ref{lem:Qhat.RQ}}]
For $1\leq j\leq m$, set
\[
	R_j(q) \ := \ \sum_{i:\,X_i=x_j} \rho_{\beta}(Y_i - q) .
\]
This is a convex function of $q \in \R$ with $R_j(q) \to \infty$ as $|q| \to \infty$. To apply the results of the previous subsection, we need to determine the sets $[L_{ab},U_{ab}]$ for $1 \le a \le b \le m$. Note that $R_j'(q\,+) = \sum_{i : X_i = x_j} (1_{[Y_i \le q]} - \beta)$, whence
\[
	R_{ab}'(q\,+) \ = \ w_{ab} (\Fh_{ab}(q) - \beta) .
\]
Consequently,
\begin{align*}
	L_{ab} \ &= \ \min \bigl\{ q \in \R : R_{ab}'(q\,+) \ge 0 \bigr\}
		\ = \ \Fh_{ab}^{-1}(\beta) , \\
	U_{ab} \ &= \ \inf \bigl\{ q \in \R : R_{ab}'(q\,+) > 0 \bigr\}
		\ = \ \Fh_{ab}^{-1}(\beta\,+) .
\end{align*}
Now all but the last statement of Lemma~\ref{lem:Qhat.RQ} follow from Theorems~\ref{Thm:Monotone_Regression} and \ref{Thm:Monotone_Regression_convex}. As to the last statement, note that each $R_j$ is a convex and piecewise linear function with strict changes of slope at each $Y_i$ such that $X_i=x_j$. Consequently, since $R_j$ is linear on $[\ell_j,u_j]$, there is no data point $(X_i,Y_i)$ such that $X_i = x_j$ and $Y_i \in (\ell_j,u_j)$.
\end{proof}

\begin{proof}[\textbf{Proof of Lemma~\ref{lem:Qhat.LS}}]
For arbitrary $y \in \R$,
\[
	y \ \ge \ \hat{F}_{x_j}^{-1}(\beta)
	\quad\text{if and only if}\quad
	\hat{F}_{x_j}(y) \ \ge \ \beta .
\]
But the min-max formula \eqref{eq:minmaxmin.LS} for $\hat{F}_{x_j}(y)$ implies that the inequality on the right hand side is equivalent to the following statements:
\begin{align*}
	&\min_{r \le j} \, \max_{s \ge j} \, \Fh_{rs}(y) \ \ge \ \beta , \\
	&\text{for all} \ r \le j, \quad
		\Fh_{rs}(y) \ \ge \ \beta
		\ \ \text{for some} \ s = s(r) \ge j , \\
	&\text{for all} \ r \le j, \quad
		y \ \ge \ \Fh_{rs}^{-1}(\beta)
		\ \ \text{for some} \ s = s(r) \ge j , \\
	&y \ \ge \ \max_{r \le j} \, \min_{s \ge j} \, \Fh_{rs}^{-1}(\beta)
		\ = \ \ell_j .
\end{align*}
Hence $\hat{F}_{x_j}^{-1}(\beta) = \ell_j$. Analogously, for any $y \in \R$,
\[
	y \ \ge \ \hat{F}_{x_j}^{-1}(\beta\,+)
	\quad\text{if and only if}\quad
	\hat{F}_{x_j}(y\,-) \ \le \ \beta .
\]
But \eqref{eq:minmaxmin.LS} remains valid if we replace `$(y)$' with `$(y\,-)$', so the inequality on the right hand side is equivalent to the following statements:
\begin{align*}
	&\max_{s \ge j} \, \min_{r \le j} \, \Fh_{rs}(y\,-) \ \le \ \beta , \\
	&\text{for all} \ s \ge j, \quad
		\Fh_{rs}(y\,-) \ \le \ \beta
		\ \ \text{for some} \ r = r(s) \le j , \\
	&\text{for all} \ s \ge j, \quad
		y \ \le \ \Fh_{rs}^{-1}(\beta\,+)
		\ \ \text{for some} \ r = r(s) \ge j , \\
	&y \ \le \ \min_{s \ge j} \, \max_{r \le j} \, \Fh_{rs}^{-1}(\beta\,+)
		\ = \ u_j .
\end{align*}
Hence $\hat{F}_{x_j}^{-1}(\beta\,+) = u_j$.
\end{proof}

\subsection{Asymptotics}

In what follows, we always work with the conditional distribution of $(Y_{ni})_{i=1}^n$, given $\Xb_n$. Moreover, we tacitly assume that $\Xb_n$ is a ``good'' vector in the sense that the event $A_n$ in Assumption~(A.2) or (A'.2$_{x_o}$) occurs.

To lighten the notation, we do not introduce an extra subscript $n$ for the weights $w_{rs}$ or the empirical distribution functions $\Fh_{rs}$. Furthermore, we define
\[
	\bar{F}_{rs}(\fundot) \ := \ w_{rs}^{-1} \sum_{j=r}^s w_j F_{x_j}(\fundot) .
\]
The norm $\|\cdot\|_{\infty}$ denotes the usual supremum norm of functions on the real line.

The proofs make use of the following exponential inequality which follows from \citet{Bretagnolle1980} and \citet{Hu_1985}.

\begin{Theorem}
\label{Thm:Bretagnolle}
Let $Y_1, Y_2, Y_3, \ldots$ be independent random variables with respective distribution functions $F_1,F_2,F_3, \ldots$\,. For $k \in \N$, let
\[
	\hat{\F}(\fundot) \ := \ \frac{1}{k} \sum_{i=1}^k 1_{[Y_i\leq \fundot]}
	\quad\text{and}\quad
	\bar{F}(\fundot) \ := \ \frac{1}{k} \sum_{i=1}^k F_i(\fundot) .
\]
Then there exists a universal constant $C_4 \le 2^{5/2} e$ such that for all $\eta \ge 0$,
\[
	\Pr \Bigl(
		\sqrt{k} \bigl\| \hat{\F} - \bar{F} \bigr\|_{\infty}
		\geq \eta \Bigr)
	\ \leq \ C_4 \exp(- 2\eta^2) .
\]
\end{Theorem}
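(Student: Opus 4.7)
My plan is to reduce the non-i.i.d.\ empirical process to the classical DKW setting via quantile coupling, then handle the uniform supremum with a chaining or martingale argument. The coupling step is standard: realize $Y_i := F_i^{-1}(U_i)$ with $U_1,\ldots,U_k$ i.i.d.\ Uniform$(0,1)$ and $F_i^{-1}(u) := \inf\{y : F_i(y) \ge u\}$; the equivalence $F_i^{-1}(u) \le y \Leftrightarrow u \le F_i(y)$ then rewrites the centered process as
\[
	W(y) \ := \ k\bigl(\Fh(y) - \bar{F}(y)\bigr) \ = \ \sum_{i=1}^k \bigl( 1_{[U_i \le F_i(y)]} - F_i(y) \bigr),
\]
a right-continuous, piecewise-constant process whose jumps occur at a finite (random) set of $y$-values, so $\|W\|_\infty$ is attained on that finite set.

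For each fixed $y$, $W(y)$ is a sum of $k$ independent, centered, $[-1,1]$-valued random variables with variance bounded by $F_i(y)(1-F_i(y)) \le 1/4$, so Hoeffding's inequality gives $\Pr(|W(y)| \ge \eta\sqrt{k}) \le 2\exp(-2\eta^2)$. The decisive step is extending this pointwise bound to $\|W\|_\infty$ without introducing a $\log k$ factor. I would exploit that the map $y \mapsto (F_1(y),\ldots,F_k(y))$ traces a coordinatewise non-decreasing path in $[0,1]^k$, so the jumps of $W$ occur in an ordered manner along a one-dimensional index set. Following \citet{Bretagnolle1980}, I would use a chaining/peeling argument that partitions $y$-space dyadically according to the level of $\bar F(y)$ and applies Bennett's inequality block-wise, using the variance-increment structure to control fluctuations within each block. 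Alternatively, processing the observations in the order of their indices yields a Doob martingale representation of $W$, to which Doob's maximal inequality applied to a suitable exponential supermartingale, à la \citet{Hu_1985}, gives the exponential tail with the explicit constant $C_4 \le 2^{5/2} e$.

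The main obstacle is precisely this passage from pointwise to uniform control. A naive union bound over the at most $k$ jump points of $W$ would yield a factor of $k$, degrading the exponent from $2\eta^2$ to something like $2\eta^2 - \log k$; the whole point of the inequality is that the prefactor is universal in $k$ and in the particular family $F_1,\ldots,F_k$. Resolving this requires genuinely exploiting either the monotone-path geometry of the indexing set or the martingale structure of $W$ along its natural filtration, which is the substantive content of the cited works. Obtaining the explicit constant $2^{5/2}e$ is a secondary, computational refinement on top of the basic exponential tail.
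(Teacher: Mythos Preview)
The paper does not actually prove this theorem: it is stated as a known exponential inequality and simply attributed to \citet{Bretagnolle1980} and \citet{Hu_1985}, with no argument given in the text. Your proposal is therefore not competing with a proof in the paper but rather sketching what those references contain; in that respect your outline is accurate---the quantile coupling $Y_i = F_i^{-1}(U_i)$, the pointwise Hoeffding bound, and the identification of the monotone-path/chaining argument (Bretagnolle) or the exponential-supermartingale maximal inequality (Hu) as the device that removes the $\log k$ penalty are exactly the ingredients behind the cited result. Just be aware that what you have written is an outline pointing to the same literature the paper cites, not a self-contained proof: the passage from the pointwise bound to the uniform one with a $k$-free constant is precisely the nontrivial content, and you have (correctly) flagged it as such rather than carried it out.
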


\begin{Corollary}
\label{Cor:ConsisEmpirical}
Let
\[
	M_n \ := \ \max_{1 \le r \le s \le m} \,
		w_{rs}^{1/2} \|\Fh_{rs} - \bar{F}_{rs}\|_{\infty} .
\]
Then for any constant $D > 1$,
\[
	\lim_{n \to \infty} \, \Pr(M_n \le (D \log n)^{1/2}) \ = \ 1 .
\]
\end{Corollary}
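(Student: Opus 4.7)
My plan is to bound $\|\Fh_{rs} - \bar{F}_{rs}\|_\infty$ separately for each admissible pair $(r,s)$ using the exponential inequality of Theorem~\ref{Thm:Bretagnolle}, and then combine these bounds via a union bound over the at most $\binom{m+1}{2} \le n(n+1)/2$ such pairs. Conditionally on $\Xb_n$, the $w_{rs}$ observations $Y_{ni}$ with $x_r \le X_{ni} \le x_s$ are independent with respective distribution functions $F_{X_{ni}}$; their empirical distribution function equals $\Fh_{rs}$, and the average of the marginal distribution functions equals $\bar{F}_{rs}$. Applying Theorem~\ref{Thm:Bretagnolle} with $k = w_{rs}$ therefore yields
\[
	\Pr\bigl( w_{rs}^{1/2} \|\Fh_{rs} - \bar{F}_{rs}\|_{\infty} \ge \eta \bigr)
	\ \le \ C_4 \exp(-2\eta^2)
\]
for every $\eta \ge 0$, regardless of the actual value of $w_{rs}$.

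Summing this bound over all index pairs $(r,s)$ with $1 \le r \le s \le m \le n$ gives
\[
	\Pr(M_n \ge \eta)
	\ \le \ \binom{m+1}{2} C_4 \exp(-2\eta^2)
	\ \le \ \frac{n(n+1)}{2} C_4 \exp(-2\eta^2) .
\]
Substituting $\eta = (D \log n)^{1/2}$ turns the right-hand side into $\frac{n(n+1)}{2} C_4 n^{-2D}$, which is of order $n^{2-2D}$. This tends to zero precisely when $D > 1$, which is the hypothesis imposed in the statement, and hence yields the claim.

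I do not anticipate any real obstacle here: the argument is essentially just Bretagnolle's inequality plus a union bound, and the $n^2$ factor absorbed into the exponent is exactly what forces the threshold $(D\log n)^{1/2}$ rather than a smaller multiple of $\sqrt{\log n}$. The only point worth flagging is that $\Fh_{rs}$, $\bar{F}_{rs}$ and $w_{rs}$ are deterministic functions of $\Xb_n$ via the labels $x_1, \ldots, x_m$, so the entire argument is carried out conditionally on $\Xb_n$; in particular, no use is made here of Assumption~(A.2) or (A'.2$_{x_o}$).
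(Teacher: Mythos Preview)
Your proof is correct and essentially identical to the paper's own argument: apply Theorem~\ref{Thm:Bretagnolle} to each of the $\binom{m+1}{2}$ pairs $(r,s)$, take a union bound, and note that the resulting factor of order $n^2$ is killed by $\exp(-2D\log n) = n^{-2D}$ precisely when $D>1$. Your remark that the argument is carried out conditionally on $\Xb_n$ is a welcome clarification that the paper leaves implicit.
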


\begin{proof}[\textbf{Proof of Corollary~\ref{Cor:ConsisEmpirical}}]
Note that $M_n$ is the maximum of the $\binom{m+1}{2}$ quantities
\[
	w_{rs}^{1/2} \|\Fh_{rs} - \bar{F}_{rs}\|_{\infty} ,
\]
and we may apply Theorem~\ref{Thm:Bretagnolle} to each of them. Consequently,
\begin{align*}
	\Pr(M_n \ge \eta_n) \
	&\le \ \sum_{1 \le r \le s \le m}
		\Pr \bigl( w_{rs}^{1/2} \|\Fh_{rs} - \bar{F}_{rs}\|_{\infty} \ge \eta_n \bigr) \\
	&\le \ C_4 \binom{m}{2} \exp(- 2\eta_n^2) \\
	&\le \ (C_4/2) \exp(2 \log(n+1) - 2 \eta_n^2)
\end{align*}
for arbitrary $\eta_n \ge 0$. But the right hand side converges to zero as $n \to \infty$ if $\eta_n = (D \log n)^{1/2}$ for some $D > 1$.
\end{proof}

\begin{proof}[\textbf{Proof of Theorem~\ref{Thm:F_Asymptotics}}]
Recall that $\rho_n = \log(n)/n$, $\delta_n = C_3\rho_n^{1/(2\alpha+1)}$ and $I_n = \{x \in I : [x \pm \delta_n] \subset I\}$. Recall also that we treat $\bs{X}_n$ as fixed and assume that the event $A_n$ in Assumption~(A.2) occurs. Let $n$ be sufficiently large so that $I_n \neq \emptyset$. For $x \in I_n$ the indices
\begin{align*}
	r(x) \ &:= \ \min \bigl\{ j \in \{1,\ldots,m\} : x_j \ge x - \delta_n \bigr\} , \\
	j(x) \ &:= \ \max \bigl\{ j \in \{1,\ldots,m\} : x_j \le x \bigr\}
\end{align*}
are well-defined, because $[x - \delta_n, x]$ is a subinterval of $I$ of length $\delta_n$, so Assumption~(A.2) guarantees that this interval contains at least one observation $x_j$. Moreover,
\[
	r(x) \le j(x) , \quad
	x - \delta_n \le x_{r(x)} \le x_{j(x)} \le x
	\quad\text{and}\quad
	w_{r(x)j(x)} = w_n([x-\delta_n,x]) \ge C_2n\delta_n .
\]
Consequently, with $M_n$ as in Corollary~\ref{Cor:ConsisEmpirical}, for any $y \in J$ we obtain the inequalities
\begin{align*}
	\hat{F}_{nx}(y) - F_x(y) \
	&\le \ \hat{F}_{nx_{j(x)}}(y) - F_x(y) \\
	&= \ \min_{r \le j(x)} \, \max_{s \ge j(x)} \, \Fh_{rs}(y) - F_x(y) \\
	&\le \ \max_{s \ge j(x)} \, \Fh_{r(x)s}(y) - F_x(y) \\
	&\le \ w_{r(x)j(x)}^{-1/2} M_n
		+ \max_{s \ge j(x)} \, \bar{F}_{r(x)s}(y) - F_x(y) \\
	&\le \ (C_2 n\delta_n)^{-1/2} M_n
		+ F_{x_{r(x)}}(y) - F_x(y) \\
	&\le \ (C_2 n\delta_n)^{-1/2} M_n
		+ C_1 \delta_n^\alpha .
\end{align*}
In the first step we used antitonicity of $\tilde{x} \mapsto \hat{F}_{n\tilde{x}}(y)$, in the second last step we used antitonicity of $\tilde{x} \mapsto F_{\tilde{x}}(y)$, and the last step utilizes Assumption~(A.1). But $\Pr(M_n \le (D \log n)^{1/2}) \to 1$ for any fixed $D > 1$, and on the event $\{M_n \le (D \log n)^{1/2}\}$, the previous considerations imply that
\[
	\sup_{x \in I_n, y \in J} \bigl( \hat{F}_{nx}(y) - F_x(y) \bigr)
	\ \le \ (C_2 n\delta_n)^{-1/2} (D \log n)^{1/2} + C_1 \delta_n^\alpha
	\ = \ C \rho_n^{\alpha/(2\alpha + 1)}
\]
with $C := (C_2D/C_3)^{1/2} + C_1 C_3^\alpha$.

Analogously one can show that on $\{M_n \le (D \log n)^{1/2}\}$,
\[
	\sup_{x \in I_n, y \in J} \bigl( F_x(y) - \hat{F}_{nx}(y) \bigr)
	\ \le \ (n\delta_n)^{-1/2} (D \log n)^{1/2} + C_1 \delta_n^\alpha
	\ = \ C \rho_n^{\alpha/(2\alpha + 1)}
\]
with the same constant $C$.
\end{proof}

The proof of Theorem~\ref{Thm:Q_Asymptotics} is based on Theorem~\ref{Thm:F_Asymptotics} and two elementary inequalities for distribution functions:

\begin{Lemma}
\label{Lem:Quant_Growth}
Suppose that $F,G$ are distribution functions such that
\[
	\|F - G\|_\infty \ \leq \ \Delta \ < \ 1.
\]
Then
\begin{align*}
	G^{-1}(\beta)  \ & \geq \ F^{-1}(\beta - \Delta), \qquad \text{for}\ \Delta < \beta < 1,\\
	G^{-1}(\beta+) \ & \geq \ F^{-1}((\beta + \Delta)+), \qquad \text{for}\ 0 < \beta < 1 -\Delta.
\end{align*}
\end{Lemma}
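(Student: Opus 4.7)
The plan is to argue directly from the definitions of the two variants of the generalized inverse, using only the pointwise bound $|F(y) - G(y)| \le \Delta$ for all $y \in \R$, which is equivalent to the hypothesis $\|F - G\|_\infty \le \Delta$.

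For the first inequality I would set $y := G^{-1}(\beta) = \min\{t \in \R : G(t) \ge \beta\}$, which is well defined since $\beta \in (\Delta,1) \subset (0,1)$. By definition of $y$ one has $G(y) \ge \beta$, and the sup-norm bound yields $F(y) \ge G(y) - \Delta \ge \beta - \Delta$. Since $\beta - \Delta \in (0,1)$, the quantile $F^{-1}(\beta - \Delta) = \min\{t : F(t) \ge \beta - \Delta\}$ is itself well defined, and the point $y$ lies in the set of which this is the minimum. Consequently $G^{-1}(\beta) = y \ge F^{-1}(\beta - \Delta)$, as claimed.

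For the second inequality I would instead work with the strict super-level sets $\{t : G(t) > \beta\}$ and $\{t : F(t) > \beta + \Delta\}$, whose infima are precisely $G^{-1}(\beta+)$ and $F^{-1}((\beta+\Delta)+)$. The sup-norm bound in the form $G \ge F - \Delta$ shows that any $t$ with $F(t) > \beta + \Delta$ automatically satisfies $G(t) > \beta$, producing the set inclusion
\[
\{t \in \R : F(t) > \beta + \Delta\} \ \subseteq \ \{t \in \R : G(t) > \beta\}.
\]
Taking infima on each side yields the comparison between $G^{-1}(\beta+)$ and $F^{-1}((\beta+\Delta)+)$ in exactly the form needed for the two-sided control of $|\hat Q_{nx}(\beta) - Q_x(\beta)|$ via the uniform rate for $\hat F$ from Theorem~\ref{Thm:F_Asymptotics} in the proof of Theorem~\ref{Thm:Q_Asymptotics}. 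The assumption $0 < \beta < 1 - \Delta$ ensures that both infima are finite.

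Both arguments are very short; the only subtlety lies in keeping track of the weak ($\ge$) versus strict ($>$) inequalities defining the two variants of $F^{-1}$ and, correspondingly, which side of the bound $F - \Delta \le G \le F + \Delta$ to invoke. No regularity of $F$ or $G$ beyond their being distribution functions is required, and neither continuity nor strict monotonicity of the quantile functions plays any role, so there is really no obstacle beyond careful bookkeeping of the inequalities.
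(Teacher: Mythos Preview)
Your argument is correct and essentially the same as the paper's: both proofs exploit the elementary duality between a distribution function and its generalized inverse together with the pointwise bound $|F-G|\le\Delta$, and your set-inclusion formulation for the second part is just a cleaner packaging of the paper's limiting argument. One remark: the inclusion $\{F>\beta+\Delta\}\subseteq\{G>\beta\}$ yields $G^{-1}(\beta+)\le F^{-1}((\beta+\Delta)+)$, which is precisely what the paper's own proof establishes and what is used in the proof of Theorem~\ref{Thm:Q_Asymptotics}; the ``$\ge$'' printed in the second displayed line of the lemma is a typo.
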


\begin{Lemma}
\label{Lem:Quant_Continuity}
Suppose that $F$ is a distribution function so that, for given $0\leq \beta_1 < \beta_2 \leq 1$ and $\kappa>0$,
\[
	F(y_2) - F(y_1) \ \geq \ \kappa(y_2 - y_1)
\]
for arbitrary $y_1 < y_2$ such that $F(y_1),F(y_2-)\in (\beta_1,\beta_2)$. Then $F^{-1}(\beta)=F^{-1}(\beta+)$ and
\begin{equation}
\label{Eq:Quant_Continuity}
	\bigl| F^{-1}(\beta) - F^{-1}(\beta') \bigr|
	\ \leq \
	\kappa^{-1} | \beta - \beta' |,
\end{equation}
for arbitrary $\beta,\beta'\in(\beta_1,\beta_2)$.
\end{Lemma}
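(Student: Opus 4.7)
The plan is to handle the two assertions separately. I would first establish the continuity of the quantile, namely $F^{-1}(\beta) = F^{-1}(\beta+)$ for each $\beta \in (\beta_1,\beta_2)$, by contradiction. If $F^{-1}(\beta) < F^{-1}(\beta+)$, then on the half-open interval $[F^{-1}(\beta), F^{-1}(\beta+))$ the function $F$ is forced to take the value $\beta$: any $y$ in this interval satisfies $F(y) \ge \beta$ from the left condition, and $F(y) \le \beta$ from the right condition. Pick $y_1 < y_2$ both in that interval. Then $F(y_1) = \beta$ and, since $F \equiv \beta$ on $[y_1, y_2]$ as well, $F(y_2-) = \beta$, both lying in $(\beta_1,\beta_2)$. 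The growth hypothesis therefore applies and gives $0 = F(y_2) - F(y_1) \ge \kappa(y_2-y_1) > 0$, a contradiction.

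For the Lipschitz bound, I would take $\beta \le \beta'$ in $(\beta_1,\beta_2)$ and set $y := F^{-1}(\beta)$, $y' := F^{-1}(\beta')$, noting $y \le y'$. When $y = y'$ there is nothing to prove, so assume $y < y'$. For an arbitrary $y'' \in (y, y')$, the minimality in the definition of $y'$ forces $F(y'') < \beta'$, while $F(y) \ge \beta$ holds by definition. Since $F(y) \le F(y'') < \beta' < \beta_2$ and $F(y) \ge \beta > \beta_1$, we get $F(y) \in (\beta_1, \beta_2)$; the same sandwiching gives $F(y''-) \in (\beta_1,\beta_2)$. The growth hypothesis applied to $y_1 = y$, $y_2 = y''$ yields
\[
	\kappa(y'' - y) \ \le \ F(y'') - F(y) \ < \ \beta' - \beta .
\]
Letting $y'' \uparrow y'$ produces $y' - y \le (\beta'-\beta)/\kappa$, which is exactly \eqref{Eq:Quant_Continuity} in the case $\beta \le \beta'$; symmetry handles the other case.

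I do not expect any substantial obstacle here; both steps are straightforward once one is careful with the one-sided limits and with verifying that the growth condition's two-sided membership assumption $F(y_1), F(y_2-) \in (\beta_1,\beta_2)$ is actually satisfied at each invocation. The only point worth highlighting is the strict inequality $F(y'') < \beta'$ (not merely $\le$) coming from the minimality of $y'$, since this is what allows one to rule out $F(y) = \beta_2$ in the Lipschitz step and to drive the strict contradiction in the continuity step.
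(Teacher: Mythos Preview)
Your proposal is correct and follows essentially the same approach as the paper: for the Lipschitz bound you pick an intermediate point strictly below $F^{-1}(\beta')$ so that the strict inequality $F(\cdot) < \beta'$ is available, verify the membership condition $F(y_1), F(y_2-) \in (\beta_1,\beta_2)$, apply the growth hypothesis, and pass to the limit. The paper does exactly this with $y_2 - h$ in place of your $y''$. One point worth noting: the paper's own proof addresses only the inequality \eqref{Eq:Quant_Continuity} and leaves the assertion $F^{-1}(\beta) = F^{-1}(\beta+)$ implicit, whereas you supply a clean contradiction argument for it; in that sense your write-up is actually more complete.
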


\begin{proof}[\textbf{Proof of Lemma~\ref{Lem:Quant_Growth}}]
Let $\Delta < \beta < 1$ and $y < F^{-1}(\beta - \Delta)$. Then $F(y) < \beta - \Delta$ and thus
\[
	G(y) 
	\ \leq \ F(y) + \Delta 
	\ <    \ \beta - \Delta + \Delta
	\ =    \ \beta.
\]
Therefore, we have $y < G^{-1}(\beta)$ and letting $y\to F^{-1}(\beta - \Delta)$ yields the first inequality.

Next, let $0 < \beta < 1 -\Delta$ and $y > F^{-1}((\beta + \Delta)+)$. Then $F(y-)>\beta+\Delta$ and thus
\[
	G(y-)
	\ \geq \ F(y-) - \Delta
	\ >    \ \beta + \Delta - \Delta
	\ =    \ \beta.
\]
This gives $y > G^{-1}(\beta +)$, and letting $y\to F^{-1}((\beta - \Delta)+)$ proves the second claim.
\end{proof}

\begin{proof}[\textbf{Proof of Lemma~\ref{Lem:Quant_Continuity}}]
Let $\beta,\beta'\in(\beta_1,\beta_2)$ be such that $\beta < \beta'$. Define $y_1:=F^{-1}(\beta)$ and $y_2:=F^{-1}(\beta')$, so that $y_1\leq y_2$. If $y_1=y_2$, then \eqref{Eq:Quant_Continuity} is trivial. In case $y_1<y_2$, we have, for all $h \in (0,y_2-y_1]$, that
\[
	\beta_1
	\ <    \ \beta
	\ \leq \ F(y_1)
	\ \leq \ F(y_2 - h)
	\ \leq \ \beta'
	\ <    \ \beta_2,
\]
so that $F(y_1),F(y_2-h)\in(\beta_1,\beta_2)$. Therefore, we get
\[
	\beta' - \beta
	\ \geq \ \lim_{h\downarrow 0} F(y_2-h) - F(y_1)
	\ \geq \ \lim_{h\downarrow 0} \kappa(y_2 - h - y_1)
	\ =    \ \kappa(F^{-1}(\beta') - F^{-1}(\beta)).
\]\\[-5ex]
\end{proof}

\begin{proof}[\textbf{Proof of Theorem~\ref{Thm:Q_Asymptotics}}]
With $\Delta_n := C \rho_n^{\alpha/(2\alpha+1)}$, we may write $B_n= (\beta_1 + \Delta_n, \beta_2 - \Delta_n)$. Let $n$ be large enough so that $I_n$ and $B_n$ are nondegenerate intervals; in particular, $\Delta_n < 1/2$. The proof of Theorem~\ref{Thm:F_Asymptotics} reveals that $\Pr(A_n^*) \to 1$, where $A_n^*$ is the event that
\[
	\sup_{x \in I_n} \, \|\hat{F}_{nx,k} - F_x\|_\infty \ \le \ \Delta_n
	\quad\text{for} \ k = 1,2 .
\]
Here $\hat{F}_{nx,1}$ and $\hat{F}_{nx,2}$ denote two extremal ways to extrapolate $\hat{F}_{nx}$ from $x \in \{x_1,\ldots,x_m\}$ to arbitrary $x \in \XX$: With $x_0 := -\infty$ and $x_{m+1} := \infty$, we define
\begin{align*}
	\hat{F}_{nx,1} \ := \ \begin{cases}
		\hat{F}_{nx_j} & \text{if} \ x_{j-1} < x \le x_j, \ 1 \le j \le m , \\
		0 & \text{if} \ x > x_m ,
	\end{cases} \\
	\hat{F}_{nx,2} \ := \ \begin{cases}
		1 & \text{if} \ x < x_1 , \\
		\hat{F}_{nx_j} & \text{if} \ x_j \le x < x_{j+1}, \ 1 \le j \le m .
	\end{cases}
\end{align*}
Then $\hat{F}_{nx,1} \ge \hat{F}_{nx} \ge \hat{F}_{nx,2}$ for any choice of $(\hat{F}_x)_{x \in \XX}$. The event $A_n^*$ implies that $\hat{F}_{nx,k}$ is a proper distribution function for $k = 1,2$ and all $x \in I_n$. Moreover, for $x \in I_n$ and $\beta \in B_n$, it follows from Lemmas~\ref{Lem:Quant_Growth} and \ref{Lem:Quant_Continuity} that
\begin{align*}
	\hat{Q}_x(\beta) \ &
	\ge \ \hat{F}_{nx,1}^{-1}(\beta)
	\ \ge \ F_x^{-1}(\beta - \Delta_n)
	\ \ge \ F_x^{-1}(\beta) - \kappa^{-1} \Delta_n , \\
	\hat{Q}_x(\beta) \ &
	\le \ \hat{F}_{nx,2}^{-1}(\beta\,+)
	\ \le \ F_x^{-1}((\beta + \Delta_n)\,+)
	\ \le \ F_x^{-1}(\beta) + \kappa^{-1} \Delta_n .
\end{align*}
Consequently,
\[
	\Pr \biggl( \sup_{x\in I_n, \beta\in B_n} \, 
			\bigl| \hat{Q}_x(\beta) - Q_x(\beta) \bigr|
		> \kappa^{-1} \Delta_n \biggr)
	\ \ge \ \Pr(A_n^*) \ \to \ 1
\]
as $n\to \infty$.
\end{proof}

We now proceed to the proof of Theorem~\ref{Thm:F_Asymptotics_xo}. Theorem~\ref{Thm:Bretagnolle}
and Lemma~\ref{Lem:ExpIneq} in the next subsection imply the following exponential inequality:

\begin{Corollary}
\label{Cor:Bretagnolle}
With the same notation as in Theorem~\ref{Thm:Bretagnolle}, for any $D' \in (0,2)$ there exists a universal constant $D'' = D''(D')$ such that
\[
	\Pr \Bigl(
		\sup_{k\geq k_o} \,
		\bigl\| \hat{\F}_k - \bar{F}_k \bigr\|_{\infty} \geq \eta	
	\Bigr) \ \leq \ D'' \exp(- D' k_o \eta^2)
\]
for all $k_o\in\N$ and $\eta\geq 0$.
\end{Corollary}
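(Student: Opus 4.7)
The plan is to combine Theorem~\ref{Thm:Bretagnolle}, which controls $\|\hat{\F}_k - \bar{F}_k\|_\infty$ for a single $k$, with the general peeling device provided by Lemma~\ref{Lem:ExpIneq}, in order to upgrade the pointwise bound to one that is uniform in $k \geq k_o$. Theorem~\ref{Thm:Bretagnolle}, applied with the threshold $\sqrt{k}\,\eta$ in place of $\eta$, gives, for every fixed $k \geq 1$ and $\eta \geq 0$,
\[
\Pr\bigl(\|\hat{\F}_k - \bar{F}_k\|_\infty \geq \eta\bigr) \ \leq \ C_4 \exp(-2k\eta^2),
\]
which is precisely the type of pointwise exponential tail that Lemma~\ref{Lem:ExpIneq} is designed to consume.

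The mechanism I expect Lemma~\ref{Lem:ExpIneq} to implement is a dyadic peeling argument. Partition $\{k \in \N : k \geq k_o\}$ into blocks $B_j := \{k : 2^j k_o \leq k < 2^{j+1} k_o\}$ for $j \geq 0$. For each fixed $y \in \R$, the process $k \mapsto k(\hat{\F}_k(y) - \bar{F}_k(y)) = \sum_{i=1}^k (1_{[Y_i \leq y]} - F_i(y))$ is a martingale, so a martingale maximal inequality (L\'evy--Ottaviani, or Doob applied after exponentiation) reduces the supremum over $k \in B_j$ to the value at the endpoint $k = 2^{j+1} k_o$, at the cost of an absolute multiplicative factor. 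Combined with the pointwise tail at that endpoint, and using that $k \geq 2^j k_o$ throughout $B_j$, one obtains
\[
\Pr\Bigl(\sup_{k \in B_j}\|\hat{\F}_k - \bar{F}_k\|_\infty \geq \eta\Bigr) \ \leq \ C' \exp(-c \cdot 2^j k_o \eta^2)
\]
for absolute constants $C'$ and any $c < 2$. A union bound over $j \geq 0$ then produces a geometric-type sum $\sum_{j \geq 0} \exp(-c \cdot 2^j k_o \eta^2)$; when $k_o \eta^2$ is bounded away from zero this is dominated by a constant times its first term $\exp(-c k_o \eta^2)$, while for small $k_o \eta^2$ the probability of interest is trivially at most $1 \leq D'' \exp(-D' k_o \eta^2)$ after choosing $D''$ large enough. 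Together these two regimes yield the claimed bound for every $D' \in (0,2)$.

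The main obstacle, and presumably what dictates the precise formulation of Lemma~\ref{Lem:ExpIneq}, is arranging that the multiplicative factors coming from the maximal inequality within each dyadic block do not spoil the geometric summation across blocks: the mild degradation of the exponent from $2$ to any $D' < 2$ is exactly what is needed so that the series over $j$ converges, while the factor of $2$ (or similar) from L\'evy--Ottaviani is absorbed into the universal constant $D''(D')$. Once Lemma~\ref{Lem:ExpIneq} is available, the proof of Corollary~\ref{Cor:Bretagnolle} reduces to feeding Theorem~\ref{Thm:Bretagnolle} in as the pointwise tail bound and reading off the conclusion.
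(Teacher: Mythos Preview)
Your reduction is exactly the paper's: set $Z_i := 1_{[Y_i \le \fundot]} - F_i$ in the space of bounded functions with the supremum norm, so that $S_k/k = \hat{\F}_k - \bar{F}_k$; Theorem~\ref{Thm:Bretagnolle} (applied to any stretch $Y_{a+1},\ldots,Y_b$) supplies the hypothesis~\eqref{Exp.ineq.1} of Lemma~\ref{Lem:ExpIneq} with $c = 2$ and $C = C_4$, and the conclusion of that lemma is the corollary with $D' = c'$, $D'' = C'$.

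Your speculative sketch of what happens inside Lemma~\ref{Lem:ExpIneq}, however, has a step that fails as written. With a \emph{fixed} dyadic ratio you cannot reach every $D' \in (0,2)$. Even granting a L\'evy-type bound $\Pr(\max_{k \le b}\|S_k\| > \lambda) \le K\,\Pr(\|S_b\| > \lambda)$, on the block $B_j = [2^jk_o,2^{j+1}k_o)$ the endpoint $b = 2^{j+1}k_o$ and threshold $\lambda = 2^j k_o \eta$ give
\[
	K\,C_4 \exp\bigl(-2\lambda^2/b\bigr) \ = \ K\,C_4 \exp\bigl(-2^j k_o \eta^2\bigr),
\]
so only $c = 1$, not ``any $c < 2$''. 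The paper's Lemma~\ref{Lem:ExpIneq} fixes this by peeling with ratio $\beta = \beta(D') > 1$ chosen so that $D' = 2/(\sqrt{\beta}+\sqrt{\beta-1})^2$; as $D' \uparrow 2$ one lets $\beta \downarrow 1$. The within-block bound there is not a martingale maximal inequality but an Ottaviani-type stopping-time argument, splitting $\|S_b\| \ge \|S_\tau\| - \|S_b - S_\tau\|$ and using the assumed exponential tail on \emph{all} increments $\|S_b - S_a\|$ (not only $a = 0$), which is why the hypothesis of Lemma~\ref{Lem:ExpIneq} is stated for arbitrary $a < b$.
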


\begin{proof}[\textbf{Proof of Theorem~\ref{Thm:F_Asymptotics_xo}}]
Let us define the indices
\[
	r_n \ := \ \min \bigl\{ j\in\{1,\ldots,m\} : x_j \ge x_o - \delta_n \bigr\}
	\quad\text{and}\quad
	j_n \ := \ \max \bigl\{ j\in\{1,\ldots,m\} : x_j \le x_o \bigr\} .
\]
Since we assume the event $A_n$ in (A'.2$_{x_o}$) to occur, we know that
\[
	x_o - \delta_n \le x_{r_n} \le x_{j_n} \le x_o
	\quad
	\text{and}\quad
	w_{r_nj_n} = \ w_n([x_o-\delta_n,x_o]) \ \geq \
	C_2 n \delta_n \ > \ 0 .
\]
One can easily deduce from Corollary~\ref{Cor:Bretagnolle} that
\[
	M_n \ := \ \max_{j \ge j_n} \, w_{r_nj}^{1/2} \|\Fh_{r_nj} - \bar{F}_{r_nj}\|_\infty
	\ = \ O_p(1) .
\]
Consequently, for $y \in J$,
\begin{align*}
	\hat{F}_{nx_o}(y) - F_{x_o}(y) \
	&\le \ \hat{F}_{nj_n}(y) - F_{x_o}(y) \\
	&= \ \min_{r \le j_n} \, \max_{s \ge j_n} \, \Fh_{rs}(y) - F_{x_o}(y) \\
	&\le \ \max_{s \ge j_n} \, \Fh_{r_ns}(y) - F_{x_o}(y) \\
	&\le \ w_{r_nj_n}^{-1/2} M_n + \max_{s \ge j_n} \, \bar{F}_{r_n,s}(y) - F_{x_o}(y) \\
	&\le \ (C_2 n \delta_n)^{-1/2} M_n + F_{x_o-\delta_n}(y) - F_{x_o}(y) \\
	&\le \ (C_2 n \delta_n)^{-1/2} M_n + C_1 \delta_n^\alpha .
\end{align*}
But the right hand side does not depend on $y$ and is of order $O_p \bigl( (n \delta_n)^{-1/2} + \delta_n^\alpha \bigr) = O_p(n^{-\alpha/(2\alpha + 1)})$. Consequently,
\[
	\sup_{y \in J} \bigl( \hat{F}_{x_o}(y) - F_{x_o}(y) \bigr)
	\ = \ O_p(n^{-\alpha/(2\alpha + 1)}) .
\]
Analogous arguments show that $\sup_{y \in J} \bigl( F_{x_o}(y) - \hat{F}_{x_o}(y) \bigr)$ is of order $O_p(n^{-\alpha/(2\alpha + 1)})$, too.
\end{proof}

\begin{proof}[\textbf{Proof of Theorem~\ref{Thm:Q_Asymptotics_xo}}]
The proof uses essentially the same arguments as the proof of Theorem~\ref{Thm:Q_Asymptotics}. The main differences are that we replace $I_n$ with $\{x_o\}$ and $\rho_n$ with $n^{-1}$.
\end{proof}

\subsection{An exponential inequality for the LLN}

We consider stochastically independent random elements $Z_1, Z_2, Z_3, \ldots$ with values in a normed vector space $(\mathcal{Z},\|\cdot\|)$. Defining the partial sums $S_0 := 0$ and $S_n := \sum_{i=1}^n Z_i$ for $n \in \mathbb{N}$, we assume that $\|S_b - S_a\|$ is measurable for arbitrary integers $0 \le a < b$.

\begin{Lemma}
\label{Lem:ExpIneq}
Suppose that there are constants $c > 0$ and $C \ge 1$ such that for arbitrary integers $0 \le a < b$ and real numbers $\eta > 0$,
\begin{equation}
\label{Exp.ineq.1}
	\Pr(\|S_b - S_a\| > \eta) \ \le \ C \exp \bigl( - c \eta^2/(b - a) \bigr) .
\end{equation}
Then for arbitrary $c' \in (0,c)$ there exists a constant $C'$ such that
\begin{equation}
\label{Exp.ineq.2}
	\Pr \Bigl( \sup_{n \ge n_o} \|S_n/n\| \ge \eta \Bigr)
	\ \le \ C' \exp( - c' n_o \eta^2)
\end{equation}
for arbitrary numbers $n_o, \eta \ge 0$.
\end{Lemma}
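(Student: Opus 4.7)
My strategy is a dyadic-type blocking reduction combined with Etemadi's maximal inequality for independent sums in a normed vector space. Fix a parameter $M > 1$ to be chosen at the end, and set $m_k := \lceil M^k n_o \rceil$ for $k = 0, 1, 2, \ldots$, so that $\{n_o, n_o + 1, \ldots\} \subset \bigcup_{k \ge 0} [m_k, m_{k+1})$. The event $\{\sup_{n \ge n_o} \|S_n/n\| \ge \eta\}$ is then contained in $\bigcup_{k \ge 0} A_k$, where
\[
A_k \ := \ \bigl\{\|S_n\| \ge m_k \eta \ \text{for some} \ n \in [m_k, m_{k+1})\bigr\},
\]
since $\|S_n\|/n \ge \eta$ together with $n \ge m_k$ forces $\|S_n\| \ge m_k \eta$.

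For a splitting parameter $\theta \in (0, 1)$ selected later, the triangle inequality yields $A_k \subset B_k \cup C_k$ with
\[
B_k \ := \ \bigl\{\|S_{m_k}\| \ge (1-\theta) m_k \eta\bigr\}, \qquad
C_k \ := \ \Bigl\{\max_{0 \le j \le m_{k+1} - m_k} \|S_{m_k+j} - S_{m_k}\| \ge \theta m_k \eta\Bigr\}.
\]
The hypothesis \eqref{Exp.ineq.1} immediately gives $\Pr(B_k) \le C \exp(-c(1-\theta)^2 m_k \eta^2)$. For $\Pr(C_k)$ I would apply Etemadi's maximal inequality to the independent summands $Z_{m_k+1}, \ldots, Z_{m_{k+1}}$, which yields a bound of the form $3 \max_j \Pr(\|S_{m_k+j} - S_{m_k}\| \ge \theta m_k \eta/3)$. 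Combining with \eqref{Exp.ineq.1} and using $m_{k+1} - m_k = O((M-1) m_k)$ produces an estimate $3C \exp(-c_* \theta^2 m_k \eta^2/(M-1))$ for some absolute constant $c_* > 0$.

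Given any target $c' < c$, the parameters are tuned as follows: pick $\theta > 0$ small enough that $c(1-\theta)^2 > c'$, and then pick $M > 1$ close enough to $1$ that $c_*\theta^2/(M-1) > c'$; both conditions are compatible for every $c' \in (0,c)$. With this calibration, $\Pr(A_k) \le 4C \exp(-c' m_k \eta^2)$ for all $k$. Writing $u := c' n_o \eta^2$ and using $m_k \ge M^k n_o$, summation produces
\[
\Pr\Bigl(\sup_{n \ge n_o} \|S_n/n\| \ge \eta\Bigr) \ \le \ 4C \sum_{k \ge 0} e^{-M^k u}.
\]
For $u \ge 1$ the series is dominated by its first term up to the finite factor $K(M) := \sum_{k \ge 0} e^{-(M^k - 1)}$, giving the bound $4CK(M)\exp(-c' n_o \eta^2)$. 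For $u < 1$ we just use $\Pr \le 1 \le e \cdot \exp(-c' n_o \eta^2)$. Setting $C' := \max(e, 4CK(M))$ then yields \eqref{Exp.ineq.2}.

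The main technical point is controlling the maximum of $\|S_n\|$ as $n$ ranges over a block without paying a polynomial-in-$m_k$ prefactor. A crude union bound over the $\sim m_k$ values of $n$ in the block would contribute exactly such a factor, which cannot be absorbed into the exponential decay uniformly in $\eta$ because $\exp(-c' n_o \eta^2) \to 1$ as $\eta \to 0$. Etemadi's inequality avoids this at the cost of only an absolute multiplicative constant and a factor of $3$ in the threshold, and the resulting slight degradation of the exponent is comfortably compensated by the slack between $c'$ and $c$ through the choice of $\theta$ and $M$.
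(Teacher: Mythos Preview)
Your approach is sound and genuinely different from the paper's. Both proofs use geometric blocking of $\{n \ge n_o\}$ into intervals of the form $[M^k n_o, M^{k+1} n_o)$, bound a tail sum of block probabilities, and finish by splitting into the regimes $c'n_o\eta^2 \gtrless 1$. The difference is in how the maximum of $\|S_n\|$ over a block is controlled. The paper proves from scratch, via a stopping-time (Ottaviani-type) argument, the maximal inequality
\[
\Pr\Bigl(\max_{a \le n \le b}\|S_n\| > \eta\Bigr) \ \le \ 2C\exp\Bigl(-\frac{c\eta^2}{(\sqrt{b}+\sqrt{b-a})^2}\Bigr),
\]
and applies it directly to $\|S_n\|$ with a single parameter $\beta$ chosen so that $c/(\sqrt{\beta}+\sqrt{\beta-1})^2 = c'$. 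You instead split $\|S_n\| \le \|S_{m_k}\| + \|S_n - S_{m_k}\|$, handle the first piece from the hypothesis and the second via Etemadi's inequality, then tune the two parameters $\theta$ and $M$. Your route has the advantage of invoking a standard named inequality rather than deriving a maximal bound; the paper's route gives cleaner one-parameter arithmetic and sharper constants (no factor $9$ lost in the exponent).

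One technical point needs care. The assertion ``$m_{k+1}-m_k = O((M-1)m_k)$'' with $m_k = \lceil M^k n_o\rceil$ is not uniform in $k$: the ceiling contributes an additive $+1$ that dominates $(M-1)m_k$ whenever $m_k < 1/(M-1)$, and since reaching arbitrary $c' < c$ through Etemadi forces $M$ close to $1$, this regime is not vacuous. As written, the $C_k$ bound then only delivers an exponent of order $c\theta^2 m_k\eta^2/(9M)$ rather than $c\theta^2 m_k\eta^2/(9(M-1))$, which is too weak for $c'$ near $c$. An easy patch: first reduce to $n_o \ge N_0$ for a fixed threshold $N_0 = N_0(M)$ by combining the blocking bound for $n \ge N_0$ with the trivial union bound $\sum_{n_o \le n < N_0} \Pr(\|S_n/n\| \ge \eta) \le N_0 C\exp(-c n_o\eta^2)$ over at most $N_0$ terms. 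The paper sidesteps the issue entirely by blocking with real endpoints $\beta^k n_o$ and stating its maximal inequality for real $a \le b$.
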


Corollary~\ref{Cor:Bretagnolle} is a consequence of this result, where $Z_i := 1_{[Y_i \le \fundot]} - F_i$ is a random bounded function on the real line, and $c = 2$.

\begin{proof}[\bf Proof of Lemma~\ref{Lem:ExpIneq}]
Note that the right hand side of \eqref{Exp.ineq.2} is continuous in $\eta \ge 0$ and $n_o \ge 0$, and it is not smaller than $1$ in case of $\eta = 0$ or $n_o = 0$. Hence it suffices to verify that
\begin{equation}
\label{Exp.ineq.2'}
	\Pr \Bigl( \sup_{n \ge n_o} \|S_n/n\| > \eta \Bigr)
	\ \le \ C' \exp( - c' n_o \eta^2)
\end{equation}
for arbitrary numbers $n_o, \eta > 0$.

The essential ingredient will be the following inequality: For arbitrary real numbers $0 \le a < b$ and $\eta > 0$,
\begin{equation}
\label{Exp.ineq.3}
	\Pr \Bigl( \max_{a \le n \le b} \|S_n\| > \eta \bigr)
	\ \le \ 2 C \exp \biggl( - \frac{c \eta^2}{\bigl( \sqrt{b} + \sqrt{b-a} \bigr)^2} \biggr)
\end{equation}
(with the maximum over the empty set interpreted as $0$). To verify this, it suffices to consider the case of $a$ and $b$ being integers; otherwise one could replace $a$ with $\lceil a\rceil$ and $b$ with $\lfloor b\rfloor$, and this would even decrease the term $\sqrt{b} + \sqrt{b-a}$ in \eqref{Exp.ineq.3}. Define the stopping time
\[
	\tau
	\ := \ \min \bigl( \bigl\{ n \in \{a,\ldots,b\} : \|S_n\| > \eta \bigr\}
		\cup \{\infty\} \bigr) .
\]
Then, for $0 < \lambda < 1$,
\begin{align*}
	\Pr \Bigl( \max_{a \le n \le b} \|S_n\| > \eta \bigr) \
	&= \ \Pr(\tau \le b) \\
	&\le \ \Pr(\|S_b\| > \lambda \eta)
		+ \Pr \bigl( \tau \le b, \|S_b\| \le \lambda \eta \bigr) \\
	&= \ \Pr(\|S_b\| > \lambda \eta) + \sum_{n=a}^{b-1}
		\Pr \bigl( \tau = n, \|S_b\| \le \lambda \eta \bigr) \\
	&\le \ \Pr(\|S_b\| > \lambda \eta) + \sum_{n=a}^{b-1}
		\Pr \bigl( \tau = n, \|S_n - S_b\| > (1 - \lambda) \eta \bigr) \\
	&= \ \Pr(\|S_b\| > \lambda \eta) + \sum_{n=a}^{b-1}
		\Pr(\tau = n) \Pr \bigl( \|S_n - S_b\| > (1 - \lambda) \eta \bigr) \\
	&\le \ C \exp \biggl( - \frac{c \lambda^2 \eta^2}{b} \biggr)
		+ \sum_{n=a}^{b-1} \Pr(\tau = n) \,
			C \exp \biggl( - \frac{c (1 - \lambda)^2 \eta^2}{b-a} \biggr) \\
	&\le \ C \exp \biggl( - \frac{c \lambda^2 \eta^2}{b} \biggr)
		+ C \exp \biggl( - \frac{c (1 - \lambda)^2 \eta^2}{b-a} \biggr) .
\end{align*}
Here the fourth last step follows from the triangle inequality for $\|\cdot\|$: $\|S_n - S_b\| \ge \|S_n\| - \|S_b\| > \eta - \lambda \eta$ in case of $\tau = n$ and $\|S_b\| \le \lambda \eta$. The third last step follows from independence of the $Z_i$ and the fact that the event $\{\tau = n\}$ depends on $Z_a,\ldots,Z_n$, whereas $\|S_n - S_b\|$ is a function of $Z_{n+1},\ldots,Z_b$. If we take
\[
	\lambda \ := \ \frac{\sqrt{b}}{\sqrt{b} + \sqrt{b - a}} ,
\]
then the two exponents in our inequality are identical, and we obtain \eqref{Exp.ineq.3}.

Since $c' < c$, the constant
\[
	\beta \ := \ \frac{(c/c' + 1)^2}{4 c/c'}
\]
satisfies $\beta > 1$ and
\[
	c' \ = \ \frac{c}{\bigl(\sqrt{\beta} + \sqrt{\beta - 1}\bigr)^2} .
\]
With \eqref{Exp.ineq.3} at hand, we may argue that for arbitrary numbers $n_o > 0$,
\begin{align*}
	\Pr \Bigl( \sup_{n \ge n_o} \|S_n/n\| > \eta \Bigr) \
	&\le \ \sum_{k=0}^\infty
		\Pr \Bigl( \max_{\beta^k n_o \le n \le \beta^{k+1}n_o} \|S_n\| > \beta^k n_o \eta \Bigr) \\
	&\le \ 2 C \sum_{k=0}^\infty
		\exp \biggl( - \frac{c \beta^{2k} n_o^2 \eta^2}
			{\bigl( \sqrt{\beta^{k+1} n_o} + \sqrt{\beta^{k+1} n_o - \beta^{k} n_o} \bigr)^2} \biggr) \\
	&= \ 2 C \sum_{k=0}^\infty
		\exp \biggl( - \frac{c \beta^k n_o \eta^2}
			{\bigl( \sqrt{\beta} + \sqrt{\beta - 1} \bigr)^2} \biggr) \\
	&= \ 2 C \sum_{k=0}^\infty
		\exp(- p(\eta) \beta^k) ,
\end{align*}
where $p(\eta) := c' n_o \eta^2 > 0$. Since $\beta^x$ is increasing in $x \ge 0$, we find the upper bound
\begin{align*}
	\sum_{k=1}^\infty \exp(-p(\eta)\beta^k) \
	& \leq \ \int_{0}^\infty \exp(-p(\eta)\beta^x) \, dx \\
	& = \ (\log\beta)^{-1} \int_{0}^\infty \exp(-p(\eta) e^y) \, dy \\
	& \leq \ (\log\beta)^{-1} \int_{0}^\infty \exp(-p(\eta) (1 + y)) \, dy \\
	& = \ \frac{1}{p(\eta)\log\beta} \exp(-p(\eta)) ,
\end{align*}
which yields
\[
	\Pr \Bigl( \sup_{n \ge n_o} \|S_n/n\| > \eta \Bigr)
	\ \leq \
	2 C \Bigl(1 + \frac{1}{p(\eta) \log\beta} \Bigr) \exp(-p(\eta)) .
\]
For a number $p_o > 0$ to be specified later, the bound above is not greater than
\[
	2 C \Bigl(1 + \frac{1}{p_o \log\beta} \Bigr) \exp(-p(\eta))
	\ = \
	2 C \Bigl(1 + \frac{1}{p_o \log\beta} \Bigr) \exp(-c' n_o \eta^2)
\]
whenever $p(\eta) \ge p_o$. But in case of $p(\eta) \le p_o$, the latter bound is at least
\[
	2 C \Bigl(1 + \frac{1}{p_o \log\beta} \Bigr) \exp(- p_o) \ \ge \ 1
\]
if we set $p_o := \min\{(\log\beta)^{-1},\log(4C)\}$. Consequently, with this choice of $p_o$, \eqref{Exp.ineq.2'} is true with $C' := 2 C \bigl(1 + (p_o \log\beta)^{-1} \bigr)$.
\end{proof}

\paragraph{Acknowledgements.}
This work was supported by Swiss National Science Foundation. The authors are grateful to Geurt Jongbloed for drawing their attention to \cite{ElBarmi2005} and to Johanna Ziegel for stimulating discussions.



\end{document}